\theoremstyle{plain}
\newtheorem{tw}{Theorem}[section]
\newtheorem {lem} [tw]{Lemma}
\theoremstyle{definition}
\newtheorem {rem} [tw]{Remark}
\newcommand{\bc} {\Bbb C}
\newcommand{\bn}{\Bbb N}
\newcommand{\be}{\Bbb E}
\newcommand{\ld}{\ldots}
\newcommand{\alg} {\mathsf{A}}
\newcommand{\blg} {\mathsf{B}}
\newcommand {\hte} {{\textrm{ht}}}
\newcommand {\Lin} {{\textup{Lin}}}
\newcommand{\edom}{\rho_{\sigma}}
\newcommand{\uend}{\rho_u}
\newcommand{\Ind}{\mathcal{J}}
\newcommand{\Pp}{\mathcal{P}}
\newcommand{\can}{\Psi}
\newcommand{\Cun}{\mathcal{O}_N}
\newcommand{\cun}{\mathcal{O}_N}
\newcommand{\twoUHF}{\mathcal{F}_2}
\newcommand{\twoCun}{\mathcal{O}_2}
\newcommand{\twomasa}{\mathcal{C}_2}
\newcommand{\subs}{\subseteq}
\newcommand{\nUHF}{\mathcal{F}_N}
\newcommand{\nmasa}{\mathcal{C}_N}
\newcommand{\ot}{\otimes}
\newcommand{\wt}{\widetilde}
\numberwithin{equation}{section}
\begin{document}

\author{Adam Skalski}
\address{Department of Mathematics and Statistics,  Lancaster University,
Lancaster, LA1 4YF} \email{a.skalski@lancaster.ac.uk}
\author{Joachim Zacharias}
\footnote{\emph{Permanent address of the first named author:} Department of Mathematics, University of \L\'{o}d\'{z}, ul. Banacha
 22, 90-238 \L\'{o}d\'{z}, Poland.}
\address{School of Mathematical Sciences,  University of Nottingham,
Nottingham, NG7 2RD}

 \email{joachim.zacharias@nottingham.ac.uk  }

\title{\bf Noncommutative topological entropy of endomorphisms of Cuntz algebras}

\keywords{Noncommutative topological entropy, Cuntz algebra, polynomial endomorphisms}
\subjclass[2000]{ Primary 46L55, Secondary
37B40}

\begin{abstract}
\noindent Noncommutative topological entropy estimates are obtained for polynomial gauge invariant
endomorphisms of Cuntz algebras, generalising known results for the canonical shift endomorphisms.
Exact values for the entropy are computed for a
class of permutative endomorphisms related to branching function systems introduced and studied
by Bratteli, Jorgensen and Kawamura.
\end{abstract}

\maketitle

In \cite{Voic} D.\,Voiculescu defined noncommutative topological entropy for automorphisms (or completely positive maps) of
nuclear $C^*$-algebras based on completely positive approximations which naturally extends the classical definition of
topological entropy for continuous maps of compact spaces. His definition has been subsequently extended to the larger class of
exact $C^*$-algebras (by Brown \cite{NateBrown}) and intensively studied in the past decade (we refer to the book \cite{book} for
a comprehensive discussion and many examples). For Cuntz algebras and its various generalisations entropy estimates have only
been obtained for canonical endomorphisms which may be regarded as noncommutative extension of classical shift maps. In
\cite{Choda} M.\,Choda showed that the noncommutative topological entropy of the canonical shift endomorphism of the Cuntz
algebra $\Cun$ is equal to $\log N$. Later in \cite{Boca} F.\,Boca and P.\,Goldstein used a different method which allowed them
to compute entropy for the shift-type endomorphisms on arbitrary Cuntz-Krieger algebras. Their techniques were extended in
\cite{Hous} to determine the values of noncommutative entropy and pressure for the multidimensional shifts on $C^*$-algebras
associated with higher-rank graphs. In all of these results it turned out that the entropy of the canonical shift endomorphism is
the same as the entropy of the corresponding classical shift.

In this paper we try to estimate entropies for more general endomorphisms of Cuntz algebras.
Endomorphisms of Cuntz algebras have been studied intensivly and have applications in subfactors,
quantum field theory and other areas. A particularly interesting class is formed by those which
leave $\nUHF$, the canonical UHF-subalgebra of $\Cun$, invariant. Besides the canonical shift
this class contains the recently introduced and intensively studied  \emph{permutative polynomial endomorphisms}.
We refer to \cite{Brat} and \cite{Kawa} for connections with branching function systems and permutative representations
and [CS$_{1-2}$] and \cite{cks} for connections with subfactors and Mathematical Physics. In particular \cite{cs2}
draws an interesting connection between our entropy estimates and indices of endomorphisms on $\twoCun$.

In the first part of this paper we give a general upper bound (in Theorem \ref{upest}) for the
entropy of endomorphism which leave $\nUHF$ invariant and which verify a certain `finite-range' condition
(polynomial endomorphisms).
Easy examples show that this bound is sharp. We note that the same methods apply to similar endomorphisms of
Cuntz-Krieger algebras, graph and even higher-rank graph $C^*$-algebras.

In the second part we obtain exact values of the entropy for all polynomial endomorphisms of
$\twoCun$ coming from permutations of rank $2$ (fully classified in \cite{Kawa}).
The results of the entropy computations are given in
the Table 1 in Section 3.
Just as the canonical endomorphism
they all leave the standard maximal abelian subalgebra $\twomasa$ of $\twoCun$ invariant and thus correspond there to
a transformations of its spectrum, the Cantor set. But contrary to the
case of the shift endomorphism it may happen that the entropy of the endomorphism is greater than
the entropy of its restriction to $\twomasa$.
However, for all permutative endomorphisms we consider we can find other non-standard masas
which are invariant and such that the restriction to this masa has the same entropy as the
whole endomorphism.

Our results lead to the following questions about endomorphisms of Cuntz algebras which we have not been able to answer.
\begin{enumerate}
\item Are there polynomial (or more general) endomorphisms for which the noncommutative topological
entropy is strictly greater than the supremum of the entropies over all classical (commutative) subsystems?
In general there are only few known examples of $C^*$-dynamical systems for which this may happen.
(E.g.\;shifts on $C^*$-algebras of certain bitstreams have this property; c.f.\;Chapter 12 in \cite{book}.)
\item Is the entropy of a polynomial (or more general gauge invariant) endomorphism  always equal
to the entropy of its restriction to the UHF-algebra $\nUHF$?
\end{enumerate}

\section{Cuntz algebras and their endomorphisms}

\subsection*{Definition and basics of Cuntz algebras} Let $N\in \bn$. Recall (\cite{Cuntz1}) that the \emph{Cuntz algebra} $\cun$ is the unique $C^*$-algebra generated
by isometries $s_1,\ld,s_N$ subject to the relations
$$
s_i^* s_j =\delta_{i,j}1 \text{ and } \sum_{i=1}^N s_is_i^*=1.
$$
$\cun$ is simple and nuclear. To describe elements in $\cun$ it is convenient to introduce the following multi-index notation.
For $k \in \bn$ the set of multi-indices of length $k$ with values in $\{1, \ldots, N\}$ will be denoted by $\Ind_k$, so that
$\Ind_k=\{1, \ldots, N\}^k$; we write $\Ind_0 = \{\emptyset\}$ and $\Ind = \cup_{k\in \bn_0} \Ind_k$ (where $\bn_0=\bn \cup
\{0\}$). Multi-indices in $\Ind$ will be denoted either by capital Latin letters $I,J,\ldots$ (as in \cite{Brat}) or little Greek
letters $\mu, \nu, \ldots$ (as in \cite{Cuntz1}), with the standard notation for concatenation: if $I=(i_1,\ldots,i_k) \in
\Ind_k$, $J=(j_1,\ldots,j_l) \in \Ind_l$, then $IJ:=(i_1,\ldots,i_k,j_1,\ldots,j_l) \in \Ind_{k+l}$. The length of $I \in \Ind_k$
is the number $k$ denoted by $|I|=k$. We will often write $p=|P|$, $l=|L|$, etc.

For $J \in \Ind_k$, $J=(j_1,\ldots,j_k)$ we write $s_J = s_{j_1} \cdots s_{j_{k}}$ (and
$s_{\emptyset}:=1$). It follows from the relations defining $\cun$ that every element
in the $*$-algebra generated by $s_1,\ld,s_N$ can be written as a linear combination
of expressions of the form $s_I s_J^*$, where $I, J \in \Ind$.
This dense $*$-subalgebra of $\Cun$ is the \emph{polynomial subalgebra} $\Pp(\Cun)$ of $\Cun$.

Notice that the span of $\{s_I s_J^*: I, J \in \Ind_k\}$ is isomorphic to $M_{N^k}$ (also denoted $F_k$), where  $s_I s_J^*$
corresponds to the matrix unit $e_{I,J}$. The subalgebra generated by $\bigcup_k \{s_I s_J^*: I, J \in \Ind_k\}$ is a UHF-algebra
of constant multiplicity $N$ which will be denoted by $\nUHF$. There is a conditional expectation $\be : \cun \to \nUHF$ which is
given by integration over the gauge action (c.f.\;\cite{Cuntz1}). The \emph{standard masa} (maximal abelian subalgebra) is the
algebra $\nmasa$ generated by $\{s_I s_I^*: I \in \Ind\}$.  It is maximal abelian in $\cun$ and $\nUHF$. So we have a chain of
natural inclusions
$$
\nmasa \subs \nUHF \subs \cun
$$
Besides the standard masa  $\nmasa$ we will consider some other masas in $\nUHF$. Recall that a
masa $\mathcal{D}$ in $\nUHF$ is said to be a \emph{product masa} if it is of the form
$\bigotimes_{i=1}^{\infty} C_i$, where each $C_i$ is a masa in $M_N$. Any two product masas are
approximately unitarily equivalent but will be conjugate only in exceptional cases. \emph{Special
product masas} are of the type $\mathcal{C} = \bigotimes_{i=1}^{\infty} C_i$, where $C_i=C_j$
for all $i, j \in \bn$. It is not hard to see that each such $\mathcal{C}$ is isomorphic to
the algebra of continuous functions on the infinite product $\prod_{i=1}^{\infty} \{1 ,\ld , N\}$
of $N$ letters, which is in turn is homeomorphic to the Cantor set $\mathfrak{C}$.

\subsection*{Endomorphisms of $\cun$}
The \emph{canonical shift endomorphism} $\theta:\Cun \to \Cun$ is given by the formula
\[
\theta(x) = \sum_{i=1}^{N} s_i x s_i^*, \;\;\; x\in \Cun.
\]
It leaves $\nUHF$ and $\nmasa$ invariant, and also every special product masa. It is easy
to see that if $\mathcal{C}$ is a special product masa and if we regard $ x \in \mathcal{C}$
as a function on $ \mathfrak{C}$ then $\theta (x) = x \circ T$, where
$T: \mathfrak{C} \to \mathfrak{C}$
is the usual one-sided shift on $\mathfrak{C}=\prod_{i=1}^{\infty} \{1 ,\ld , N\}$.

It is well-known (\cite{Cuntz}) that there is a bijective correspondence between
$*$-endomorphisms of the Cuntz algebra $\Cun$ and unitaries in $\Cun$. Given a unitary
$u\in \Cun$, $us_1 , \ldots ,us_N$ verify again the Cuntz algebra relations so that
$s_i \mapsto us_i$ for $i=1,\ldots,N$ extends uniquely to a unital endomorphism $\uend$.
Conversely, given a unital endomorphism $\rho$, $u_{\rho}=\sum_{i=1}^N\rho (s_i)s_i^*$
is a unitary such that $\rho_{u_\rho}=\rho$ and $u_{\rho_u}=u$. Easy examples are the
\emph{gauge automorphism}, where $u=\lambda 1$, with $\lambda$ a complex number of
modulus 1, or the canonical shift endomorphism, where
$u=\sum_{i,i=1}^N s_is_j s_i^* s_j^* \in \nUHF$.

A description of the action of $ \uend$ on higher monomials can be given as follows: for $k \in \bn$ if we define
\begin{equation}\label{cocyc}
u_k=u\theta (u) \ldots \theta^{k-1}(u),
\end{equation}
where $\theta$ is the shift endomorphism defined above,
then $ \uend ( s_I)=u_k s_I$ for all multi-indices $I \in \Ind_k$ and
thus $ \uend ( s_I s_J^*)=u_k s_I s_J^* u_l^*$ for $I \in \Ind_k$ and
$J \in \Ind_l$.

If $x \in \nUHF$ then $ \uend (x)= \lim_{k \to \infty} u_kxu_k^*$;
it is also known that $ \uend $ preserves $\nUHF$ iff $u \in \nUHF$
iff $ \uend $ commutes with all gauge automorphisms (\cite{Cuntz}).
These endomorphisms are thus called \emph{gauge invariant} and
for them we also have $ \tau \circ \be \circ \uend = \tau \circ \be$
where $\tau$ the unique trace on $\nUHF$.

\emph{Permutative endomorphisms} of $\Cun$ are defined as follows. Suppose that $k \in \bn$ and $\sigma$ is a permutation of the
set $\Ind_k$. Put
\[u_{\sigma} = \sum_{J \in \Ind_k} s_{\sigma(J)} s_J^*.\]
It is easy to check that $u_{\sigma}$ is a unitary in $\Cun$ more precisely it lies in $F_k=M_{N^k}$ and can be regarded as the
permutation matrix corresponding to $\sigma$. The permutative endomorphism $\edom$ corresponding to the permutation $\sigma$ is
defined as $\rho_{u_{\sigma}}$. For instance the canonical endomorphism $\theta$ is permutative. Several other examples will be
discussed in detail in the last section of this note. Remark here only that the identity, canonical shift endomorphism and the
flip automorphisms exchanging the generators all belong to this class.

Finally the following is evident from the above correspondence between unitaries and
endomorphisms.
\begin{rem}
$ \uend$ leaves the polynomial subalgebra $\Pp(\Cun)$ invariant iff $u \in \Pp(\Cun)$.
\end{rem}
\noindent
Such endomorphisms are thus called \emph{polynomial endomorphisms} and we will only consider
those in this paper.

\subsection*{Three technical Lemmas}
Define for $k, l \in \bn_0$
\begin{equation}
\label{not1} A_{k,l} = \{s_I s_J^*: I \in \Ind_k, J \in \Ind_l\}.
\end{equation}
and
\begin{equation}
\label{not2} F_{k,l} = \Lin \; A_{k,l}.
\end{equation}
$\bigcup_{k,l \in \bn_0}A_{k,l}$ is total in $\Cun$ and the linear span
$\Lin \left(\bigcup_{k,l \in \bn_0}A_{k,l}\right)= \bigcup_{k,l\in \bn_0} F_{k,l} $
is just the polynomial subalgebra $\Pp(\cun)$.

There is a well-known $*$-isomorphism between $\Cun$ and $M_{N^k} \ot \Cun$ defined by
\[ \can_k (X) = \sum_{K,M \in \Ind_k} e_{K,M} \ot s_K^* X s_M, \;\;\; X \in \Cun,\]
where $e_{K,M}$ denote the matrix units in $M_{N^k}$ as before.

The following Lemma is elementary, but crucial for what follows. It shows that neither in
Lemma 2 of \cite{Boca} nor in Lemma 2.2 of \cite{Hous} was it essential that one dealt with the
shift-type transformations.

\begin{lem} \label{new}
Let $k,p,l \in \bn$, $k \geq \max\{p,l\}$. Suppose that $X \in F_{p,l}$. If $p > l$ then
\begin{equation} \can_k(X) =
\sum_{J \in \Ind_{p-l}} T_J \ot s_J,\label{form} \end{equation}
where   $T_J \in M_{N^k}$, $\|T_J\|\leq \|X\|$ for each $J \in \Ind_{p-l}$; if $p<l$ then
\[ \can_k(X) = \sum_{J \in \Ind_{l-p}} T_J \ot s_J^*,\]
where $T_J \in M_{N^k}$, $\|T_J\|\leq \|X\|$ for each $J \in \Ind_{l-p}$ . Finally if $p=l$ then
\[\can_k(X) = T \ot 1,\]
where $T \in M_{N^k}$, $\|T\|\leq \|X\|$.
\end{lem}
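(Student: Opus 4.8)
The plan is to verify the formula by direct computation on the spanning elements $s_I s_J^*$ with $I \in \Ind_p$, $J \in \Ind_l$, and then extend by linearity (the norm bounds will follow from a uniform-in-basis estimate rather than from linearity, so the argument must be slightly more careful than ``check on a basis''). Fix such a generator $X = s_I s_J^*$ and compute
\[
\can_k(X) = \sum_{K,M \in \Ind_k} e_{K,M} \ot s_K^* s_I s_J^* s_M.
\]
The first step is to evaluate $s_K^* s_I$ for $K \in \Ind_k$, $I \in \Ind_p$ with $k \geq p$: writing $K = K' K''$ with $|K'| = p$, the Cuntz relations give $s_K^* s_I = \delta_{K', I}\, s_{K''}^*$, which is nonzero only when $K$ begins with $I$, in which case it equals $s_{K''}^*$ where $|K''| = k - p$. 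Symmetrically $s_J^* s_M = \delta_{M', J}\, s_{M''}$ where $M = M' M''$, $|M'| = l$, $|M''| = k - l$. Hence the summand is nonzero only for $K = I K''$ and $M = J M''$ with $K'', M'' \in \Ind_{k-p}$ resp.\ $\Ind_{k-l}$ (note $k \geq \max\{p,l\}$ is exactly what makes these index sets nonempty and the manipulation legal), and then
\[
s_K^* s_I s_J^* s_M = s_{K''}^* s_{M''}.
\]

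The second step is to split into the three cases according to the sign of $p - l$, treating $|K''| = k-p$ versus $|M''| = k - l$. If $p > l$ then $|K''| < |M''|$; writing $M'' = \widetilde{M} J'$ with $|\widetilde M| = k - p$ one gets $s_{K''}^* s_{M''} = \delta_{K'', \widetilde M}\, s_{J'}$ with $|J'| = (k-l)-(k-p) = p - l$. Thus the only surviving terms have $K = I K''$, $M = J K'' J'$ for $K'' \in \Ind_{k-p}$ and $J' \in \Ind_{p-l}$, giving
\[
\can_k(s_I s_J^*) = \sum_{J' \in \Ind_{p-l}} \Bigl( \sum_{K'' \in \Ind_{k-p}} e_{IK'', JK''J'} \Bigr) \ot s_{J'},
\]
which is of the claimed form $\sum_{J' \in \Ind_{p-l}} T_{J'} \ot s_{J'}$ with $T_{J'}$ a sum of distinct matrix units (the indices $IK''$ are distinct for distinct $K''$, and likewise $JK''J'$), hence a partial isometry, so $\|T_{J'}\| \leq 1 = \|s_I s_J^*\|$. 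The case $p < l$ is symmetric, swapping the roles of the two tails and producing $s_{J'}^*$; the case $p = l$ gives $k - p = k - l$, the two tails must coincide, $J' = \emptyset$, and one obtains a single matrix-unit sum tensored with $1$.

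The last step is to pass from single generators to arbitrary $X \in F_{p,l}$. For each $J' \in \Ind_{p-l}$ the assignment $X \mapsto T_{J'}(X)$ is linear; concretely, reading off the computation above, $T_{J'}(X) = (\id \ot \phi_{J'})(\can_k(X))$ where $\phi_{J'}$ is the (completely) bounded functional on $\Cun$ picking out the coefficient of $s_{J'}$, but it is cleaner to observe directly that $T_{J'}(X)$ can be recovered as a corner of $\can_k(X)$: multiplying \eqref{form} on the right by $1 \ot s_{J'}^*$ and summing against $\sum_{M'' \in \Ind_{k-p}} 1 \ot s_{M''} s_{M''}^*$ isolates $T_{J'}(X) \ot 1$ up to a compression, whence $\|T_{J'}(X)\| \leq \|\can_k(X)\| = \|X\|$ since $\can_k$ is a $*$-isomorphism. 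The main (and only real) obstacle is bookkeeping: keeping the lengths of the sub-multi-indices straight through the two-sided cancellations and making sure the case hypotheses $k \geq \max\{p,l\}$ are used at the right points; once the generator computation is organized as above, both the structural form of \eqref{form} and the norm bound drop out immediately.
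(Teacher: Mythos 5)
Your generator computation is exactly the paper's computation (the paper just carries a general linear combination $\sum\gamma_{P,L}s_Ps_L^*$ through the same cancellations), and your reduction of the norm bound to extracting $T_{J'}$ from $\can_k(X)$ is the right idea; the only problem is that the extraction recipe you wrote down does not work as stated. Multiplying $\sum_{K\in\Ind_{p-l}}T_K\ot s_K$ on the \emph{right} by $1\ot s_{J'}^*$ gives $\sum_K T_K\ot s_Ks_{J'}^*$, in which nothing has been isolated (all $s_Ks_{J'}^*$ are nonzero partial isometries), and "summing against" $\sum_{M''\in\Ind_{k-p}}1\ot s_{M''}s_{M''}^*$ does nothing at all, since $\sum_{M''\in\Ind_{k-p}}s_{M''}s_{M''}^*=1$ by the Cuntz relations. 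The correct one-line version of your idea is to multiply on the \emph{left} by $1\ot s_{J'}^*$: since $s_{J'}^*s_K=\delta_{J',K}1$ for $|K|=|J'|$, one gets $(1\ot s_{J'}^*)\can_k(X)=T_{J'}\ot 1$ exactly, hence $\|T_{J'}\|\le\|\can_k(X)\|=\|X\|$ (in the case $p<l$ multiply on the right by $1\ot s_{J'}$ instead; alternatively compress by $1\ot s_{J'}s_{J'}^*$ and right-multiply by $1\ot s_{J'}$). With that correction your argument is complete and is essentially the published one; the paper obtains the same coefficient bound slightly differently, via the $C^*$-identity $\|\sum_J\alpha_J\ot s_J\|^2=\|\sum_J\alpha_J^*\alpha_J\|$, which also only uses $\|\can_k(X)\|\le\|X\|$ ($\can_k$ being a $*$-homomorphism), so your use of $*$-isomorphism is more than is needed. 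Your observation that $T_{J'}$ is a partial isometry on a single generator is correct but, as you yourself note, plays no role in the final estimate.
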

\begin{proof}
Suppose that $p>l$ and let $X= \sum_{P\in \Ind_p, L \in \Ind_l} \gamma_{P,L} s_P s_L^*,$ where $\gamma_{P,L} \in \bc$. Then
\begin{align*}  \can_k (X) &= \sum_{K,M \in \Ind_k}   \sum_{P\in \Ind_p, L \in \Ind_l} \gamma_{P,L} e_{K,M} \ot s_K^* s_P s_L^* s_M \\
&=
 \sum_{P\in \Ind_p, L \in \Ind_l} \sum_{K'\in \Ind_{k-p}, M' \in \Ind_{k-l}}  \gamma_{P,L} e_{PK',LM'} \ot s_{K'}^* s_{M'} \\&=
 \sum_{P\in \Ind_p, L \in \Ind_l} \sum_{K'\in \Ind_{k-p}, J \in \Ind_{p-l}}  \gamma_{P,L} e_{PK',LK'L'} \ot  s_{J}.\end{align*}
This shows that \eqref{form} holds for some $T_J \in M_{N^k}$.

Observe now that for each $n,k \in \bn$ and a family of complex $n\times n$ matrices $\{\alpha_J:J \in \Ind_m\}$ we have
\begin{align*}
 \|\sum_{J \in \Ind_{m}} \alpha_J \ot s_J \|^2 &= \| ( \sum_{J \in \Ind_{m}} \alpha_J \ot s_J)^* (\sum_{K \in \Ind_{m}}
\alpha_K \ot s_K)\| \\& = \|  \sum_{J,K \in \Ind_{m}} \alpha_J^* \alpha_K  \ot s_J^*s_K \| = \|  \sum_{J \in \Ind_{m}} \alpha_J^*
\alpha_J  \ot 1 \| =  \|  \sum_{J \in \Ind_{m}} \alpha_J^* \alpha_J  \|,
\end{align*}
so in particular for any fixed $K \in\Ind_m$ we have \[\|\alpha_K\| \leq \|\sum_{J \in \Ind_{m}} \alpha_J \ot s_J \|.\] Now
connecting the above with the fact that $\can_k$ is a $^*$-homomorphism, we get for each $J \in \Ind_m$
\[ \|T_J\| \leq \|\can_k (X)\| \leq \|X\|\]
and the proof is finished. The cases $p<l$ and $p=l$ follow in a similar way.
\end{proof}

Analogous statements remain true in the context of the Cuntz-Krieger, graph and higher-rank graph $C^*$-algebras. It can be used
to estimate the entropy of a completely positive contractive map on the $C^*$-algebra of one of the types listed above if only we
have control on `how far' the map sends the canonical matrix units.

\begin{lem} \label{new2}
Let $k \in \bn$
and let $u\in F_k$ be a unitary. Then for all $m,p,l \in \bn$
\[\uend^m (F_{p,l}) \subseteq F_{p+m(k-1),
l+m(k-1)}.\]
\end{lem}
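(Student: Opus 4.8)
The plan is to argue by induction on $m$, so that the whole statement reduces to the case $m=1$. Indeed, assuming $\uend^m(F_{p,l})\subseteq F_{p+m(k-1),\,l+m(k-1)}$ for all $p,l\in\bn$, one more application of the $m=1$ case (with $p,l$ replaced by $p+m(k-1),\,l+m(k-1)$, which again lie in $\bn$) would give
\[
\uend^{m+1}(F_{p,l})=\uend\bigl(\uend^m(F_{p,l})\bigr)\subseteq\uend\bigl(F_{p+m(k-1),\,l+m(k-1)}\bigr)\subseteq F_{p+(m+1)(k-1),\,l+(m+1)(k-1)}.
\]
So it suffices to show $\uend(F_{p,l})\subseteq F_{p+k-1,\,l+k-1}$ for all $p,l\in\bn$.

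Before that I would isolate two elementary facts about the spaces $F_{a,b}$. First, since $\sum_{|K|=r}s_Ks_K^*=1$, one has $s_Is_J^*=\sum_{|K|=r}s_{IK}s_{JK}^*$, hence the ``padding'' inclusion $F_{a,b}\subseteq F_{a+r,b+r}$ for every $r\in\bn_0$; and clearly $(F_{a,b})^*=F_{b,a}$. Second, if $|J|=|K|=c$ then $s_J^*s_K=\delta_{J,K}1$, so $F_{a,c}\cdot F_{c,d}\subseteq F_{a,d}$. Combining these (pad whichever of the two inner indices is the smaller, then cancel) yields, for arbitrary $a,b,c,d\in\bn_0$,
\[
F_{a,b}\cdot F_{c,d}\ \subseteq\ F_{\,a+\max\{0,c-b\},\ d+\max\{0,b-c\}\,};
\]
in particular $F_{a,a}\cdot F_{b,b}\subseteq F_{\max\{a,b\},\,\max\{a,b\}}$.

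Next I would locate the cocycle unitaries $u_j$ of \eqref{cocyc}. From $\theta(s_Is_J^*)=\sum_i s_{iI}s_{iJ}^*$ we get $\theta(F_{a,b})\subseteq F_{a+1,b+1}$, so $\theta^{j}(u)\in F_{k+j,\,k+j}$ because $u\in F_k=F_{k,k}$; multiplying these and using $F_{a,a}F_{b,b}\subseteq F_{\max\{a,b\},\max\{a,b\}}$ repeatedly gives $u_j\in F_{k+j-1,\,k+j-1}$ for every $j\ge1$. Now fix $I\in\Ind_p$, $J\in\Ind_l$. By the formula $\uend(s_Is_J^*)=u_p\,s_Is_J^*\,u_l^*$ recalled in the preliminaries, this is a product of three homogeneous pieces, $u_p\in F_{k+p-1,\,k+p-1}$, $s_Is_J^*\in F_{p,l}$ and $u_l^*\in F_{k+l-1,\,k+l-1}$. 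Multiplying the first two (the inner indices are $k+p-1\ge p$, so by the rule above the product lands in $F_{k+p-1,\,l+k-1}$) and then multiplying on the right by $u_l^*$ (now the inner indices $k+l-1$ agree exactly, so nothing grows) shows $\uend(s_Is_J^*)\in F_{k+p-1,\,k+l-1}=F_{p+(k-1),\,l+(k-1)}$. Since $F_{p,l}=\Lin A_{p,l}$, this gives $\uend(F_{p,l})\subseteq F_{p+k-1,\,l+k-1}$, which completes the base case and hence the proof.

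The only point requiring care is the bookkeeping with the double grading $(a,b)$ on $F_{a,b}$; once the padding identity $F_{a,b}\subseteq F_{a+r,b+r}$ and the matching-index product $F_{a,c}F_{c,d}\subseteq F_{a,d}$ have been extracted, every step above is a one-line computation, so I do not anticipate a genuine obstacle.
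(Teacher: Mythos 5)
Your proof is correct and takes essentially the same route as the paper: reduce to the case $m=1$ and apply the formula $\uend(s_Is_J^*)=u_p\,s_Is_J^*\,u_l^*$ together with the observation that $u\in F_k$ forces $u_j\in F_{j+k-1}$. The only difference is that you make explicit the padding inclusion $F_{a,b}\subseteq F_{a+r,b+r}$ and the multiplication rule for the spaces $F_{a,b}$, bookkeeping which the paper's one-line proof leaves implicit.
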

\begin{proof}
It suffices to show this for $m=1$.
Using the formula $\uend (s_P s_L^*)=u_ps_P s_L^*u_l^*$, where $p=|P|$ and $l=|L|$ (and $u_p$ and $u_l$ are as in \;(\ref{cocyc})), this follows
since if $u\in F_k$ then $u_p \in F_{p+k-1}$ and $ u_l \in F_{l+k-1}$.
\end{proof}
\noindent
If $u$ is a permutation matrix we have a stronger statement.
\begin{lem} \label{inv}
Let $\sigma$ be a permutation of $\Ind_k$, $p,l,m \in \bn$. Then
\begin{equation} \edom^m (A_{p,l}) \subseteq A_{p+m(k-1),
l+m(k-1)}. \label{rough}\end{equation}
Further $\edom$ leaves both $\nUHF$ and $\nmasa$ invariant.
\end{lem}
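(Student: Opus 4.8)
The plan is to reduce everything, exactly as in the proof of Lemma~\ref{new2}, to the case $m=1$ and then iterate. The content of \eqref{rough} beyond Lemma~\ref{new2} is that for a permutation matrix the image of each generator is a genuine sum of generators, not merely an element of their linear span, and it is this rigidity that also forces $\nmasa$ and $\nUHF$ to be invariant (a generic unitary of $F_k$ does not normalise the standard masa). For $P\in\Ind_p$, $L\in\Ind_l$ one has, by the formula for $\uend$ on monomials recalled above,
\[
\edom(s_Ps_L^*)=(\unit)_p\,s_Ps_L^*\,(\unit)_l^*,\qquad (\unit)_q:=\unit\,\theta(\unit)\cdots\theta^{q-1}(\unit)
\]
as in \eqref{cocyc}, so everything rests on the cocycle $(\unit)_q$, and the key point is that $(\unit)_q$ is again a permutation matrix.

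To see this, a short induction from $\theta(x)=\sum_i s_i x s_i^*$ gives $\theta^{j}(\unit)=\sum_{K\in\Ind_j,\,J\in\Ind_k}s_{K\sigma(J)}s_{KJ}^*$, that is, $\theta^j(\unit)$ is the permutation matrix in $F_{k+j}$ of the permutation $KJ\mapsto K\sigma(J)$ of $\Ind_{k+j}$. Pushing each of the factors $\unit,\theta(\unit),\dots,\theta^{q-1}(\unit)$ into the single algebra $F_{q+k-1}=M_{N^{q+k-1}}$ through the canonical unital inclusions $F_j\hookrightarrow F_{q+k-1}$ (under which a permutation matrix $u_\tau$ goes to the permutation matrix of $AC\mapsto\tau(A)C$) exhibits $(\unit)_q$ as a product of permutation matrices of $\Ind_{q+k-1}$, hence $(\unit)_q=u_{\tau_q}$ for some $\tau_q\in\mathrm{Perm}(\Ind_{q+k-1})$.

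With this in hand the case $m=1$ is a direct multi-index computation. Splitting $A\in\Ind_{p+k-1}$ as $A=BC$ with $|B|=p$, $|C|=k-1$ and using $s_A^*s_P=\delta_{B,P}\,s_C^*$, one obtains $\edom(s_P)=u_{\tau_p}s_P=\sum_{C\in\Ind_{k-1}}s_{\tau_p(PC)}s_C^*$, and likewise $\edom(s_L)=\sum_{D\in\Ind_{k-1}}s_{\tau_l(LD)}s_D^*$; multiplying out with $s_C^*s_D=\delta_{C,D}$ gives
\[
\edom(s_Ps_L^*)=\sum_{C\in\Ind_{k-1}}s_{\tau_p(PC)}\,s_{\tau_l(LC)}^*,
\]
a sum of $N^{k-1}$ elements of $A_{p+k-1,\,l+k-1}$; iterating $m$ times, each step raising both lengths by $k-1$, yields \eqref{rough}. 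For the last assertion, taking $L=P$ forces $l=p$, hence $\tau_l=\tau_p$, so the image above is $\sum_{C\in\Ind_{k-1}}s_{\tau_p(PC)}s_{\tau_p(PC)}^*$, a sum of generating projections of $\nmasa$; since $\nmasa$ is the closed linear span of $\{s_Is_I^*:I\in\Ind\}$ and $\edom$ is continuous, $\edom(\nmasa)\subseteq\nmasa$. When only $|P|=|L|$, the displayed image lies in $F_{|P|+k-1}\subseteq\nUHF$, so $\edom(\nUHF)\subseteq\nUHF$ (this also follows from $\unit\in F_k\subseteq\nUHF$ and the correspondence recalled earlier).

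The one step I expect to require genuine care is the passage from the factors $\theta^j(\unit)$, which a priori lie in the distinct algebras $F_k,\dots,F_{q+k-1}$, to their product: one has to fix the canonical embeddings $F_j\hookrightarrow F_{q+k-1}$ and check that they carry permutation matrices to permutation matrices, so that the product is again one. Everything after that is concatenation bookkeeping.
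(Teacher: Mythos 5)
Your proof is correct and takes essentially the same route that the paper's one-line proof (``similar to Lemma \ref{new2}'') intends: write $\edom(s_Ps_L^*)=u_ps_Ps_L^*u_l^*$ and observe that the cocycle of a permutation unitary is again a permutation matrix in $F_{q+k-1}$, which is exactly what your embedding argument makes explicit. Your computation $\edom(s_Ps_L^*)=\sum_{C\in\Ind_{k-1}}s_{\tau_p(PC)}s_{\tau_l(LC)}^*$ also pins down the correct reading of \eqref{rough} — each element of $A_{p,l}$ is sent to a \emph{sum} of elements of $A_{p+m(k-1),l+m(k-1)}$ (i.e.\ into $F_{p+m(k-1),l+m(k-1)}$), not to a single monomial — which is the form actually used later (e.g.\ in Lemma \ref{observe}) and yields the invariance of $\nmasa$ and $\nUHF$ exactly as you argue.
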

\begin{proof}
Similar to Lemma \ref{new2}.
\end{proof}
\noindent
If a unitary $u \in F_{k}$ is not a permutation matrix, the endomorphism $\rho_u$ need not leave $\nmasa$ invariant.

\section{An Entropy Bound for Gauge Invariant Polynomial Endomorphisms}

\subsection*{Topological Entropy}
Let $\alg$ be a unital $C^*$-algebra. We say that $(\phi,\psi,C)$ is an \emph{approximating triple}
for $\alg$ if $C$ is a finite-dimensional $C^*$-algebra and both $\phi:C\to \alg$, $\psi:\alg \to C$
are unital and completely positive (ucp). This will be indicated by writing $(\phi,\psi,C)\in CPA(\alg)$.
Whenever $\Omega$ is a finite subset of $\alg$ ($\Omega \in FS(\alg)$) and
$\varepsilon >0$ the statement
$(\phi,\psi,C)\in CPA(\alg, \Omega, \varepsilon)$ means that $(\phi,\psi,C)\in CPA(\alg)$ and
for all $a \in \Omega$
\[
\|\phi \circ \psi(a) - a \| < \varepsilon.
\]
Nuclearity of $\alg$ is equivalent to the fact that for each $\Omega \in FS(\alg)$ and $\varepsilon >0$ there exists a triple
$(\phi,\psi,C)\in CPA(\alg, \Omega, \varepsilon)$. For such algebras  one can define
\[
\textrm{rcp}(\Omega, \varepsilon)= \min\{\textrm{rank}\,C:\; (\phi,\psi,C)\in CPA(\alg, \Omega, \varepsilon)\},
\]
where $\textrm{rank}\,C$ denotes the dimension of a maximal abelian subalgebra of $C$. Let us recall the definition of
noncommutative topological entropy in nuclear unital $C^*$-algebras, due to D.Voiculescu (\cite{Voic}). Assume that $\alg$ is
nuclear and $\gamma:\alg\to \alg$ is a ucp map. For any $\Omega \in FS(\alg)$ and $n\in \bn$ let
\begin{equation}
\label{enning} \text{orb}^n(\Omega)=\Omega^{(n)} = \bigcup_{j=0}^n {\gamma^j(\Omega)}.
\end{equation}
Then the (Voiculescu) \emph{noncommutative topological entropy} is given by the formula:
\[
\hte(\gamma) = \sup_{\varepsilon>0, \, \Omega \in FS(\alg)} \limsup_{n\to\infty} \left(\frac{1}{n} \log
\textrm{rcp}(\Omega^{(n)}, \varepsilon)\right).
\]
As shown in \cite{Voic} Proposition 4.8 the approximation entropy coincides with classical
topological entropy in the commutative case (see \cite{Walt}, Chapter 7 for the definition of the latter).
Another important property to be used in the sequel is the fact that the entropy decreases when
passing to invariant subalgebras. More precisely,
\begin{rem}
Let $\alg$ be a nuclear $C^*$-algebra, $\gamma : \alg \to \alg$ ucp and $\blg \subset \alg$ a nuclear subalgebra such that
$\gamma (\blg) \subseteq \blg$. Then $\hte(\gamma|_{\blg})\leq \hte (\gamma)$, where $\hte$ denotes the Voiculescu entropy.
\end{rem}
\begin{proof}
The assertion holds true if $\hte$ denotes Brown's entropy and $\alg$, hence $\blg$, are exact (\cite{NateBrown}). But for unital
completely positive maps on nuclear $C^*$-algebras the definition of the entropy given above coincides with that due to
N.\,Brown.
\end{proof}

We will also use the following version of the \emph{Kolmogorov-Sinai property}: if $(\Omega_i)_{i\in I}$ is a family of finite
subsets of $\alg$ such that $\bigcup_{i \in I, n \in \bn} \text{orb}^n(\Omega_i)$ is total in $\alg$ then $\hte (\gamma) =
\sup_{\varepsilon >0, i \in I} \lim \sup_{n \to \infty} \left( \tfrac{1}{n} \log \textrm{rcp}(\text{orb}^n(\Omega_i),\varepsilon
) \right)$. The proof is an easy modification of the one of Theorem 6.2.4 of \cite{book}. For further details and proofs and
various related topics we refer to \cite{book}. Note that $\Cun$ as a unital nuclear $C^*$-algebra falls into the class
considered above.

\subsection*{The Entropy Bound}
The main general result of this note is the following theorem. The proof is a generalisation of
that of Theorem 2.4 of \cite{Hous} (see also \cite{Boca}), now using Lemmas \ref{new} and
\ref{new2} instead of Lemma 2.2 of that paper. For the convenience of
the reader we give a terse reproduction of the proof with the necessary changes.

\begin{tw} \label{upest}
Let $k \in \bn$ and $u \in F_{k}$ be a unitary. Then \[ \hte(\uend)  \leq (k-1) \log N.\] In
particular if $\sigma$ is a permutation of $\Ind_k$, then this estimate holds true for the
corresponding permutation endomorphism $\edom$.
\end{tw}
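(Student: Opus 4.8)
The plan is to estimate $\textrm{rcp}(\Omega^{(n)}, \varepsilon)$ for a suitable cofinal family of finite sets $\Omega$ and use the Kolmogorov--Sinai property. Since $\bigcup_{p,l} A_{p,l}$ is total in $\Cun$, by the Kolmogorov--Sinai property it suffices to work with the sets $\Omega_{p,l} = A_{p,l}$ (or their unions over small $p,l$), and indeed by monotonicity it is enough to treat $\Omega = A_{p,p}$ for large $p$, since every $A_{p,l}$ is contained in some $A_{q,q}$ up to increasing the indices. So fix such a $p$ and an $\varepsilon > 0$; I want to build an approximating triple $(\phi, \psi, C) \in CPA(\Cun, \Omega^{(n)}, \varepsilon)$ with $\textrm{rank}\, C$ growing no faster than $N^{(k-1)n}$ up to a subexponential factor.

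First I would locate where $\Omega^{(n)} = \bigcup_{j=0}^n \uend^j(A_{p,p})$ lives. By Lemma \ref{new2}, $\uend^j(F_{p,p}) \subseteq F_{p+j(k-1),\, p+j(k-1)}$, so with $r = r(n) := p + n(k-1)$ every element of $\Omega^{(n)}$ lies in $F_{r,r}$. Next, apply the isomorphism $\can_k$ with a large parameter $m = m(n)$ (to be chosen, e.g.\ $m = r + s$ for a slowly growing slack $s$): Lemma \ref{new} tells me that $\can_m$ maps each $X \in F_{r,r}$ to an element of the form $T \otimes 1$ with $T \in M_{N^m}$ and $\|T\| \le \|X\|$. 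Thus $\can_m(\Omega^{(n)})$ sits inside the finite-dimensional matrix block $M_{N^m} \otimes 1 \subseteq M_{N^m} \otimes \Cun$. Composing the ucp maps $\can_m : \Cun \to M_{N^m} \otimes \Cun$, the conditional expectation / corner onto $M_{N^m} \otimes 1$, and $\can_m^{-1}$ gives a ucp idempotent-type map that is exactly the identity on $\Omega^{(n)}$; taking $C = M_{N^m}$ with $\psi$ the compression through $\can_m$ and $\phi = \can_m^{-1}$ restricted to $M_{N^m} \otimes 1$ yields $(\phi, \psi, C) \in CPA(\Cun, \Omega^{(n)}, 0)$ with $\textrm{rank}\, C = N^m$.

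Now the entropy bound follows by optimizing $m$. We need $m \ge r(n) = p + n(k-1)$ for Lemma \ref{new} to apply, so take $m = m(n)$ with $m(n)/n \to k-1$; for instance $m(n) = p + n(k-1)$ works directly (the condition in Lemma \ref{new} is $m \ge \max\{p',l'\}$ for $X \in F_{p',l'}$, and here $p' = l' = r(n) = m(n)$, so equality is allowed). Then
\[
\frac{1}{n}\log \textrm{rcp}(\Omega^{(n)}, \varepsilon) \le \frac{1}{n}\log N^{m(n)} = \frac{m(n)}{n}\log N = \frac{p + n(k-1)}{n}\log N \xrightarrow[n\to\infty]{} (k-1)\log N.
\]
Taking $\limsup$ over $n$, then supremum over $\varepsilon > 0$ and over the cofinal family of sets $A_{p,p}$, gives $\hte(\uend) \le (k-1)\log N$. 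The final assertion is immediate: a permutation $\sigma$ of $\Ind_k$ gives the permutation matrix $\unit \in F_k$, which is unitary, so $\edom = \rho_{\unit}$ is covered by the first part.

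The main obstacle, and the only genuinely delicate point, is getting the ``range control'' exactly right so that the matrix size is $N^{m(n)}$ with $m(n) = p + n(k-1)$ rather than something larger: one must chase the indices through Lemmas \ref{new2} and \ref{new} carefully, checking that the boundary case $m = r(n)$ is permitted and that no extra slack is forced. Everything else — that $\can_m$ and the compression onto the matrix corner are ucp, that the composition is the identity on the relevant finite set, and that the Kolmogorov--Sinai reduction to the sets $A_{p,p}$ is legitimate — is routine given the setup already recorded in the excerpt.
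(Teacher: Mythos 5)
There is a genuine gap, and it is precisely at the step you dismiss as routine: the Kolmogorov--Sinai reduction to the balanced sets $A_{p,p}$. Your claim that ``every $A_{p,l}$ is contained in some $A_{q,q}$ up to increasing the indices'' is false: the only way to lengthen a word is $s_I s_J^* = \sum_{K} s_{IK} s_{JK}^*$, which preserves the difference $|I|-|J|$, so an element of $A_{p,l}$ with $p \neq l$ lies in $F_{p+m,\,l+m}$ for every $m$ but never in any $F_{q,q}$ (indeed $F_{q,q} \subseteq \nUHF$ is fixed by the gauge action and by the conditional expectation $\be$, while $\be$ annihilates $s_I s_J^*$ when $|I| \neq |J|$). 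Worse, since $u \in F_k \subseteq \nUHF$, the endomorphism $\uend$ is gauge invariant, so the orbits $\uend^j(A_{p,p})$ all stay inside $\nUHF$; hence $\bigcup_{p,n}\mathrm{orb}^n(A_{p,p})$ is total only in $\nUHF$, not in $\Cun$ (the generators $s_i$ are not in its closed span), and the totality hypothesis of the Kolmogorov--Sinai property fails. Your argument therefore only yields $\hte(\uend|_{\nUHF}) \leq (k-1)\log N$, and monotonicity of entropy under passing to invariant subalgebras goes in the wrong direction to upgrade this to $\hte(\uend)$; whether $\hte(\uend) = \hte(\uend|_{\nUHF})$ in general is exactly one of the open questions raised in the introduction, so this is not a patchable technicality.

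The repair is the route the paper takes: keep the unbalanced words, i.e.\ work with $\omega_l = \bigcup_{p,q\leq l} A_{p,q}$, whose orbits are total in $\Cun$. Then for $X \in A_{p',q'}$ with $p',q' \leq l$ one has $\uend^j(X) \in F_{p'+j(k-1),\,q'+j(k-1)}$ by Lemma \ref{new2}, and Lemma \ref{new} gives $\can_m(\uend^j(X)) = \sum_{J} T_J \otimes s_J$ (or with $s_J^*$), where the tail word $J$ has length $|p'-q'| \leq l$, bounded independently of $n$. The tail factor is then handled not by an exact compression onto $M_{N^m}\otimes 1$ (which is only available in the balanced case $p'=q'$, essentially the $p=l$ case of Lemma \ref{new}, and that part of your construction is fine) but by a single fixed approximating triple $(\phi_0,\psi_0,M_{C_l}) \in CPA(\Cun,\omega_l,\delta/4N^l)$ applied on the second tensor leg, giving the finite-dimensional algebra $M_{N^m}\otimes M_{C_l}$ with $m = n(k-1)+l$. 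Since $C_l$ does not depend on $n$, one still gets $\tfrac{1}{n}\log\mathrm{rcp}(\omega_l^{(n)},\delta) \leq \tfrac{1}{n}\bigl(\log C_l + m\log N\bigr) \to (k-1)\log N$, which is the desired bound; the final remark about permutation matrices $\unit \in F_k$ is correct as you state it.
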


\begin{proof}
Put for each $n \in \bn$
\[ \omega_l = \bigcup_{p,q=1}^l A_{p,q}.\]
Fix $l\in \bn$, $\delta >0$. As $\Cun$  is nuclear, there exists a triple $(\phi_0,\psi_0, M_{C_l}) \in CPA (\Cun,\omega_l,
\frac{1}{4N^{l}}\delta)$. Fix further $n\in \bn$ and let
\[ \omega_l^{(n)} = \bigcup_{p=0}^n \uend^p(\omega_l).\]
Put $m=n(k-1)+l$.
Nuclearity of $\Cun$ implies that there exists $d \in \bn$ and ucp maps $\gamma:\can_m(\Cun)\to M_d$ and $\eta:M_d\to \Cun$ such
that for all $a\in  \omega_l^{(n)}$,
\[
\| \eta \circ \gamma (a) - \can_m^{-1} (a) \| < \frac{\delta}{2}.
\]
Let $\mu:M_{N^m} \ot \Cun \to M_d$ be a ucp extension of $\gamma$.  Consider the following diagram:

\vspace*{0.4 cm}
\begin{picture}(350,100)
\put(0,80){$\Cun$} \put(80,80){$\can_m(\Cun)$} \put(92,50){$M_{N^m} \otimes \Cun$}
\put(325,80){$\Cun$} \put(205,80){$\can_m(\Cun)$}

\put(193,50){$M_{N^m} \otimes \Cun$}

\put(262,64){$M_d$}

\put(148,0){$M_{N^m} \otimes M_{C_l}$}

\put(39,88){$\can_m$} \put(20,83){\vector(1,0){50}}

\put(96,73){\vector(1,-1){14}}

\put(129,32){$\textrm{id} \ot \psi_0$} \put(116,42){\vector(1,-1){27}}

\put(176,32){$\textrm{id} \ot \phi_0$} \put(192,15){\vector(1,1){27}}

\put(224,65){\line(0,1){5}} \put(216,65){$\cap$}

\put(245,78){\vector(2,-1){14}} \put(245,68){$\gamma$}

\put(280,69){\vector(3,1){30}} \put(292,66){$\eta$}

\put(271,89){$\can_m^{-1}$} \put(247,84){\vector(1,0){62}}

\put(243,55){\vector(2,1){16}} \put(249,51){$\mu$}

\put(17,71){\vector(2,-1){123}} \put(65,33){$\psi$}

\put(205,11){\vector(2,1){113}} \put(265,31){$\phi$}

\end{picture}

\vspace{0.4cm}

\noindent
Consider now any $X \in \omega_l$ and let $p \in \bn$, $p\leq n$. Then
\begin{eqnarray*}
\lefteqn{\|\phi \circ \psi  (\uend^p(X)) - \uend^p(X) \|}  \hspace*{1.5cm}
 \\
&=&\|\eta \circ \mu \circ (\textrm{id} \ot \phi_0 \circ \psi_0)\circ \can_m  (\uend^p(X))
- (\can_m^{-1} \circ \can_m)(\uend^p(X)) \| \\
&\leq &\|  (\textrm{id} \ot \phi_0 \circ \psi_0) \circ \can_m  (\uend^p(X)) - \can_m(\uend^p(X)) \| +\frac{\delta}{2}.
\end{eqnarray*}
Lemma \ref{new2} implies that $\uend^p(X) \in F_{q,r}$, where $q,r \leq l+ (k-1)p \leq m$. We can assume that for example $q
>r$. Then Lemma \ref{new} implies that
\begin{eqnarray*}
\lefteqn{\|\phi \circ \psi  (\uend^p(X)) - \uend^p(X) \| } \hspace*{1.5cm}\\
&< & \Big\| \sum_{J \in \Ind_{q-r}} T_J \ot \big( (\phi_0 \circ \psi_0) ( s_J) - s_J \big) \Big\|+
 \frac{\delta}{2} \\
& < & 2 N^{m} \frac{\delta}{4N^m}+ \frac{\delta}{2} = \delta,
\end{eqnarray*}
and we proved that
\begin{equation}
\label{approx} (\phi,\psi,M_{N^m} \ot M_{C_l}) \in CPA (\Cun, \omega_l^{(n)}, \delta).
\end{equation}
This shows that $\textrm{rcp}(\omega_l^{(n)}, \delta)\leq C_l N^m$, \[\log \textrm{rcp}(\omega_l^{(n)}, \delta) \leq C_l + m \log
N= C_l + ((k-1)n + l)\log N\]  and finally
\[\limsup_{n\to\infty}
\left(\frac{1}{n} \log \textrm{rcp}(\omega_l^{(n)}, \delta)\right) \leq (k-1) \log N.\]
 The Kolmogorov-Sinai property for
noncommutative entropy  ends the proof.
\end{proof}

The proof above remains valid for any unital completely positive map on $\Cun$ which satisfies the conclusions of Lemma
\ref{new2}, and again can be suitably adapted to the context of (higher-rank) graph algebras $O_{\Lambda}$. As we are not aware
of any interesting and natural examples of such ucp maps for $O_{\Lambda}$ (apart from the canonical shifts in various directions
analysed in \cite{Hous}), we decided to present the result in the context of specific endomorphisms of $\Cun$.

If the endomorphism $\uend$ leaves the canonical masa $\nmasa$ invariant, it is also possible to obtain, exactly as in
\cite{Hous}, estimates for the noncommutative pressure (\cite{Kerr}) of any selfadjoint element of $\nmasa$. Note that in this
case however the estimate will not necessarily be optimal nor will it have to coincide with the pressure of the corresponding
element computed in $\nmasa$ viewed as the commutative subalgebra. This can be deduced from results in the next section.

It is easy to see that it is both possible to have $\hte (\uend) = 0$ (for the identity
endomorphism) and $\hte (\uend) = (k-1) \log N$ (for the endomorphism given by $(k-1)$-th iterate
of the canonical shift). Note that as $\uend$ need not leave the canonical masa in $\Cun$
invariant, we cannot always use the classical topological entropy to obtain the estimates from
below (as was done in \cite{Boca} and \cite{Hous}).

Let us stress that the examples below will show that even if $\uend$ leaves $\nmasa$ invariant it is not necessarily true that
$\hte(\uend) = \hte(\uend|_{\nmasa})$ although we will exhibit in each of the examples a special product masa ${\mathcal
C}=\ot_{i=1}^{\infty} {\mathcal C}_i$, where $ {\mathcal C}_i={\mathcal C}_j$ for all $i,j \in \bn$ such that $\hte(\uend) =
\hte(\uend|_{\mathcal C})$. Notice that for the canonical shift endomorphism  $\hte(\theta) =  \hte(\theta|_{\mathcal C})$ for
each special product masa $\mathcal C$.

\section{Entropy Values for certain permutative Endomorphisms of $\twoCun$}

This section is devoted to computing the entropy of all permutative endomorphisms $\edom:\twoCun \to \twoCun$, where $\sigma$ is a
permutation of $\Ind_2$. These 24 endomorphisms were listed and classified up to unitary equivalence
in \cite{Kawa}.
Note first that our entropy estimate in Theorem \ref{upest} implies that $\hte(\edom) \leq \log 2$.
Below we will compute the actual value of $\hte(\edom)$ (and of $\hte(\edom|_{\twoUHF})$, $\hte(\edom|_{\twomasa})$).
It turned out that in all cases either $\edom$ restricted to the standard masa or a suitable product masa gives
entropy $\log 2$ hence  $\hte(\edom) = \log 2$ or we could show directly that the entropy of $\edom$ is 0, so that
our methods are essentially commutative and do not use Voiculescu's definition.

The
notation will coincide with that of \cite{Kawa}; we identify $\Ind_2=\{(1,1),(1,2),(2,1),(2,2)\}$
with $\{1,2,3,4\}$. The subscripts in the symbols denoting permutations (e.g.\
$\sigma_{(12)(34)}$) will then correspond to the cycle decomposition of the permutation.
Thus for instance $u_{(1,2)}$ is given by:
\begin{eqnarray*}
u_{(1,2)}
&=&
s_1s_1(s_1s_2)^*+ s_1s_2(s_1s_1)^* + s_2s_1(s_2s_1)^*+s_2s_2(s_2s_2)^* \\
&=&
s_1s_1s_2^*s_1^* + s_1s_2s_1^*s_1^* + s_2s_1s_1^*s_2^*+s_2s_2s_2^*s_2^*.
\end{eqnarray*}
Therefore $\rho_{(1,2)}$ acts on the generators $s_1,s_2$ as follows
$$
\rho_{(1,2)}(s_1) = u_{(1,2)}s_1 = s_1s_1s_2^* + s_1s_2s_1^*
$$
and
$$
\rho_{(1,2)}(s_2) = u_{(1,2)}s_2 = s_2s_1s_1^* + s_2 s_2 s_2^* =s_2.
$$
The following table, modelled on that of \cite{Kawa}, summarises  the results of the entropy
computations. (In this table $s_{ij,k}$ denotes $s_is_js_k^*$.)

Notice that all automorphisms in this table have entropy 0 (these are $\rho_{id}$, $\rho_{(12)(34)}$, $ \rho_{(13)(24)}$,
$\rho_{(14)(23)}$). It is possible that permutative endomorphisms always have entropy 0.

Conti and Szyma\'nski (\cite{cs2}) have recently extended the table below by determining the indices of the
endomorphisms which shows that entropy and index seem to be related, although several combinations may occur.

{\footnotesize
\[
\begin{array}{ccccc}
\multicolumn{5}{c}{\mbox{Table 1. Entropy of the `rank 2' permutative endomorphisms of $\twoCun$.}}\\
\\
\hline \rho_{\sigma}& \rho_{\sigma}(s_{1})& \rho_{\sigma}(s_{2})&  \hte (\edom) & \hte(\edom|_{\twomasa})\\
 \hline \rho_{id} &
s_{1} & s_{2}&
0&0\\
\rho_{12}   & s_{12,1}+s_{11,2}& s_{2}&
 \log 2& 0\\
\rho_{13}   &s_{21,1}+s_{12,2}& s_{11,1}+s_{22,2}
& \log 2&\log 2\\
\rho_{14}   & s_{22,1}+s_{12,2}& s_{21,1}+s_{11,2}&
\log 2&\log 2\\
\rho_{23}   &s_{11,1}+s_{21,2}& s_{12,1}+s_{22,2}&
\log 2&\log 2\\
\rho_{24}   &s_{11,1}+s_{22,2}&s_{21,1}+s_{12,2}&
\log 2&\log 2\\
\rho_{34}   &s_{1}& s_{22,1}+s_{21,2}&
\log 2&0\\
\rho_{123} &s_{12,1}+s_{21,2}& s_{11,1}+s_{22,2}&
\log 2&\log 2\\
\rho_{132} &s_{21,1}+s_{11,2}& s_{12,1}+s_{22,2}&
\log 2&\log 2\\
\rho_{124} &s_{12,1}+s_{22,2}& s_{21,1}+s_{11,2}& \log 2&\log 2
\\
 \rho_{142} &s_{22,1}+s_{11,2}& s_{21,1}+s_{12,2}&
\log 2&\log 2\\
\rho_{134} &s_{21,1}+s_{12,2}& s_{22,1}+s_{11,2}& \log 2&\log 2
\\ \rho_{143} &s_{22,1}+s_{12,2}& s_{11,1}+s_{21,2}&
\log 2&\log 2\\
\rho_{234} & s_{11,1}+s_{21,2}& s_{22,1}+s_{12,2}&
\log 2&\log 2\\
\rho_{243} &s_{11,1}+s_{22,2}& s_{12,1}+s_{21,2}&
\log 2&\log 2\\
\rho_{1234} &s_{12,1}+s_{21,2}& s_{22,1}+s_{11,2}&
\log 2&\log 2 \\
\rho_{1243} &s_{12,1}+s_{22,2}& s_{11,1}+s_{21,2}&
\log 2&\log 2\\
\rho_{1324} &s_{2}& s_{12,1}+s_{11,2}&
\log 2&0\\
\rho_{1342} &s_{21,1}+s_{11,2}& s_{22,1}+s_{12,2}&
\log 2&\log 2\\
\rho_{1423} &s_{22,1}+s_{21,2}& s_{1}&
\log 2&0\\
\rho_{1432} &s_{22,1}+s_{11,2}& s_{12,1}+s_{21,2}&
\log 2&\log 2\\
\rho_{(12)(34)} &s_{12,1}+s_{11,2}& s_{22,1}+s_{21,2}&
0&0\\
\rho_{(13)(24)} &s_{2}& s_{1}&
0&0\\
\rho_{(14)(23)} &s_{22,1}+s_{21,2}& s_{12,1}+s_{11,2}&
0&0\\
\\
%
\end{array}
\]
}
By Lemma \ref{inv} each $\edom$ leaves the canonical masa invariant and we always have a
`commutative model' for our endomorphism.
To be more precise, the algebra $\twomasa$ is $*-$isomorphic to the algebra $C(\mathfrak{C})$
of continuous functions on the infinite product space
$\mathfrak{C}=\{(w_k)_{k=1}^{\infty}:w_k\in \{1,2\}\}$ (equipped with the usual metric
making it a $0$-dimensional compact space). The standard isomorphism is given by the
linear extension of the map $s_I s_I^* \mapsto \chi_{Z_I}$, where $Z_{I}$ denotes the
set of those sequences in $\mathfrak{C}$ which begin with the finite sequence  $I$.
Each of the endomorphisms $\edom$ restricted to $\twomasa$ corresponds therefore to
a continuous map $T_{\sigma}:\mathfrak{C} \to \mathfrak{C}$ via  $\edom (x) = x \circ T_{\sigma}$,
where we regard $x \in \twomasa$ as a continuous function on $\mathfrak{C}$.
Whilst it is well-known and easy to see that for the shift endomorphism
$T$ is simply the left shift on the sequence $(w_k)_{k=1}^{\infty}$ we have in general
only somewhat indirect information on how exactly $T_{\sigma}$ looks like for a given $\sigma$.
We note the following useful observations.
\begin{lem} \label{observe}
(i) Suppose that
$$
\edom(s_I s_I^*)= s_{I_1} s_{I_1}^* + \cdots + s_{I_m} s_{I_m}^*
$$
for a given $I \in \Ind$. Then  $s_{I_1} s_{I_1}^*, \cdots ,s_{I_m} s_{I_m}^*$ are pairwise
orthogonal and $T_{\sigma}$ restricts to a bijection
$$T_{\sigma}:\bigcup_{j=1}^m Z_{I_j}
\to Z_I.
$$
(ii) Suppose that for a given $k \in \bn$ and $i \in \{1,2\}$
\[ \edom(\sum_{J \in \Ind_k, j_k = i} s_J s_J^*) = \sum_{l=1}^n s_{I_l} s_{I_l}^*.\]
Then
\[(T_{\sigma}(w))_k = i \textrm{ if and only if } w \in \bigcup_{l=1}^n Z_{I_l}.\]
\end{lem}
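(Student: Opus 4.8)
The key observation is that the standard isomorphism $\twomasa\cong C(\mathfrak{C})$ sends the projection $s_Is_I^*$ to the indicator $\chi_{Z_I}$, and for a projection $e\in\twomasa$ corresponding to $\chi_E$ (with $E\subseteq\mathfrak{C}$ clopen) one has $\edom(e)=e\circ T_\sigma$ corresponding to $\chi_{T_\sigma^{-1}(E)}$. So the whole statement is really a translation of algebraic identities among projections into set-theoretic identities under $T_\sigma^{-1}$.

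For part (i): first note that the right-hand side $\edom(s_Is_I^*)$ is a projection (image of a projection under the $*$-endomorphism $\edom$), so $\sum_j s_{I_j}s_{I_j}^*$ is a projection; since each $s_{I_j}s_{I_j}^*$ is a projection, standard $C^*$-algebra reasoning forces $s_{I_j}s_{I_j}^*s_{I_{j'}}s_{I_{j'}}^*=0$ for $j\ne j'$, i.e.\ pairwise orthogonality. (Equivalently, the $Z_{I_j}$ are pairwise disjoint, since $\chi_{Z_{I_j}}\chi_{Z_{I_{j'}}}=\chi_{Z_{I_j}\cap Z_{I_{j'}}}$ must vanish.) Then, translating $\edom(s_Is_I^*)=\sum_j s_{I_j}s_{I_j}^*$ through the isomorphism gives $\chi_{Z_I}\circ T_\sigma=\sum_j\chi_{Z_{I_j}}=\chi_{\bigsqcup_j Z_{I_j}}$, which says exactly $T_\sigma^{-1}(Z_I)=\bigsqcup_{j=1}^m Z_{I_j}$. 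This already shows $T_\sigma$ maps $\bigcup_j Z_{I_j}$ into $Z_I$; to see it is a \emph{bijection} onto $Z_I$ I would argue: surjectivity is immediate from $T_\sigma^{-1}(Z_I)=\bigcup_j Z_{I_j}$ together with surjectivity of $T_\sigma$ on $\mathfrak{C}$ (which follows because $\edom$, being injective on the simple algebra $\twoCun$, restricts to an injective $*$-homomorphism on $\twomasa$, dually a surjection of spectra); and injectivity on that set follows from a counting/measure argument — the restriction of $\edom$ to each ``level'' $F_n\cap\twomasa$ is, after suitable iteration, a trace-preserving unital $*$-homomorphism of a finite-dimensional commutative algebra, which on spectra must be a bijection on each fibre. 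The cleanest route is to invoke Lemma \ref{inv}: $\edom(\twomasa)\subseteq\twomasa$ and $\edom$ preserves the trace (gauge-invariance), so locally $T_\sigma$ is a measure-preserving finite-to-one map whose fibre over $Z_I$ has total measure equal to that of $Z_I$, forcing the map on $\bigsqcup_j Z_{I_j}\to Z_I$ to be a bijection.

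For part (ii): observe that $\sum_{J\in\Ind_k,\,j_k=i}s_Js_J^*$ corresponds to the indicator of $\{w\in\mathfrak{C}: w_k=i\}$ (the cylinder fixing only the $k$-th coordinate). Applying $\edom$ and translating as above, $\edom$ of this projection corresponds to $\chi_{\{w:w_k=i\}}\circ T_\sigma=\chi_{T_\sigma^{-1}\{w:w_k=i\}}$; on the other hand the hypothesis says it equals $\sum_{l=1}^n\chi_{Z_{I_l}}=\chi_{\bigcup_l Z_{I_l}}$ (the $Z_{I_l}$ being disjoint by the argument of part (i)). Hence $T_\sigma^{-1}\{w:w_k=i\}=\bigcup_{l=1}^n Z_{I_l}$, which unwinds precisely to the stated equivalence: $(T_\sigma(w))_k=i$ iff $w\in\bigcup_{l=1}^n Z_{I_l}$.

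\textbf{Main obstacle.} The only genuinely non-formal point is justifying the \emph{bijectivity} in (i) rather than merely ``$T_\sigma$ maps into $Z_I$''. Disjointness and the inclusion are pure translation; bijectivity needs one extra ingredient — either trace-preservation of $\edom$ on $\twomasa$ (Lemma \ref{inv} plus gauge-invariance) to get a counting identity on each fibre, or injectivity of $\edom$ (simplicity of $\twoCun$) to get surjectivity of $T_\sigma$ and then a fibre-cardinality count. I would present the trace-preservation argument, as it most directly gives a bijection between the finite clopen pieces.
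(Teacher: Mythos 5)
Your orthogonality argument and your proof of part (ii) are fine, and they are exactly the translation the paper leaves implicit (Lemma \ref{observe} is stated there without proof): a sum of projections which is again a projection has mutually orthogonal summands, and the identification $\edom(\chi_E)=\chi_E\circ T_\sigma=\chi_{T_\sigma^{-1}(E)}$ converts the hypotheses into $T_\sigma^{-1}(Z_I)=\bigsqcup_{j=1}^m Z_{I_j}$, respectively $T_\sigma^{-1}(\{w:w_k=i\})=\bigcup_{l=1}^n Z_{I_l}$, the latter being precisely assertion (ii). Deducing surjectivity of $T_\sigma$ from injectivity of $\edom$ (simplicity of $\twoCun$) plus compactness is also correct, and it gives that $T_\sigma$ maps $\bigcup_j Z_{I_j}$ \emph{onto} $Z_I$ and that no point outside this union lands in $Z_I$.

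The gap is your final step in (i), the injectivity. Trace preservation of $\edom$ on $\twoUHF$ translates into $\mu(T_\sigma^{-1}(E))=\mu(E)$ for the Bernoulli measure $\mu$; this is a statement about \emph{preimages} and is entirely compatible with the restriction $T_\sigma:\bigsqcup_j Z_{I_j}\to Z_I$ being many-to-one, since $T_\sigma$ may expand measure on each piece $Z_{I_j}$ separately. Likewise, $\edom$ does not map the level $F_n\cap\twomasa$ to itself but into a deeper level, and a unital injective $*$-homomorphism $\bc^{2^n}\to\bc^{2^{n+k-1}}$ dualizes to a surjection of spectra, not a bijection, so no fibre count comes out of this. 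Concretely, the canonical shift $\theta=\rho_{\sigma_{23}}$ satisfies the hypothesis of (i) with $I=(1)$, $I_1=(1,1)$, $I_2=(2,1)$: one has $\mu(Z_{11}\cup Z_{21})=\tfrac12=\mu(Z_1)$, yet the shift sends both $11v$ and $21v$ to $1v$, i.e.\ it is two-to-one from $Z_{11}\cup Z_{21}$ onto $Z_1$. So your counting argument fails, and in fact literal bijectivity cannot be derived from the stated hypothesis at all, because the shift is itself one of the permutative endomorphisms covered by the lemma. What is true, and what the paper actually uses later (e.g.\ in Lemma \ref{2cases}), is the set identity $T_\sigma^{-1}(Z_I)=\bigsqcup_j Z_{I_j}$ together with surjectivity onto $Z_I$; genuine injectivity of the restriction holds only in special cases where $T_\sigma$ is injective (for instance $\sigma_{12}$, whose $T_\sigma$ is a homeomorphism). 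You should therefore either prove and use this weaker statement, or flag that the bijectivity claim needs an extra hypothesis rather than attempt to force it through a measure-preservation argument.
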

\noindent
We will often also make
use of the following.
\begin{lem} \label{ent}
Suppose that $T:\mathfrak{C} \to \mathfrak{C}$ is a continuous map for which the following implication holds true:
if $k\in \bn$, $v,w \in \mathfrak{C}$ and $w|_{k+1]} \neq v|_{k+1]}$ then either
$(Tw)|_{k]} \neq (Tv)|_{k]}$ or $w|_{k]} \neq v|_{k]}$.
Then $h_{\textrm{top}}(T)
\geq \log 2$.
\end{lem}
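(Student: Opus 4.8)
My plan is to bound $h_{\textrm{top}}(T)$ from below via Bowen's separated-set formula. Recall that for a continuous self-map of a compact metric space one has $h_{\textrm{top}}(T)\ge \limsup_{n\to\infty}\tfrac1n\log s_n(\varepsilon)$ for every fixed $\varepsilon>0$, where $s_n(\varepsilon)$ is the largest cardinality of a set that is $(n,\varepsilon)$-separated in the metric $d_n(w,v)=\max_{0\le j<n}d(T^jw,T^jv)$. So it suffices to exhibit, for each $n$, an $(n,\varepsilon)$-separated set of cardinality $2^n$ with $\varepsilon>0$ independent of $n$. First I would fix the scale: the two clopen cylinders $\{w\in\mathfrak C: w_1=1\}$ and $\{w\in\mathfrak C:w_1=2\}$ are disjoint and compact, hence a positive distance $\varepsilon$ apart; then $d(x,y)<\varepsilon$ forces $x_1=y_1$, while $x_1\ne y_1$ forces $d(x,y)\ge\varepsilon$.

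The core of the argument is a purely combinatorial claim, proved by induction on $m\ge 1$: for all $v,w\in\mathfrak C$ and all $i\ge 0$, if $(T^{i+j}w)_1=(T^{i+j}v)_1$ for $j=0,\dots,m-1$, then $(T^iw)|_{m]}=(T^iv)|_{m]}$. The case $m=1$ is trivial. For the step $m\to m+1$ one invokes the case $m$ twice — once at $i$ and once at $i+1$ — to obtain $(T^iw)|_{m]}=(T^iv)|_{m]}$ and $(T^{i+1}w)|_{m]}=(T^{i+1}v)|_{m]}$; then the contrapositive form of the hypothesis, applied with $k=m$ to the pair of points $T^iw,T^iv$, upgrades this to $(T^iw)|_{m+1]}=(T^iv)|_{m+1]}$. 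Specializing to $i=0$: if $w$ and $v$ have the same ``first-coordinate itinerary'' of length $n$, i.e. $(T^jw)_1=(T^jv)_1$ for $j=0,\dots,n-1$, then $w|_{n]}=v|_{n]}$; contrapositively, $w|_{n]}\ne v|_{n]}$ implies $(T^jw)_1\ne (T^jv)_1$ for some $j<n$.

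To finish, I would pick one representative point from each of the $2^n$ nonempty length-$n$ cylinders of $\mathfrak C$, forming a set $S$ with $|S|=2^n$. For distinct $w,v\in S$ we have $w|_{n]}\ne v|_{n]}$, so by the claim there is $j<n$ with $(T^jw)_1\ne (T^jv)_1$, whence $d(T^jw,T^jv)\ge\varepsilon$ and therefore $d_n(w,v)\ge\varepsilon$; thus $S$ is $(n,\varepsilon)$-separated and $s_n(\varepsilon)\ge 2^n$, giving $h_{\textrm{top}}(T)\ge\limsup_n\tfrac1n\log 2^n=\log 2$. The one point requiring care is the index bookkeeping in the induction: the hypothesis is available only for $k\in\bn$, i.e. $k\ge 1$, which is exactly why the induction starts harmlessly at $m=1$ and never needs a ``$k=0$'' instance, and one must keep the two shifted applications of the inductive hypothesis straight. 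The choice of $\varepsilon$, the nonemptiness of the cylinders, and the passage to topological entropy are all routine.
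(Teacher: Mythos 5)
Your proposal is correct and follows exactly the route the paper indicates: the paper's proof is just the one-line instruction to run the usual $(n,\varepsilon)$-separated-set argument from Walters, and your write-up supplies precisely that, with the inductive claim that equal first-coordinate itineraries of length $n$ force $w|_{n]}=v|_{n]}$ being the right way to make the separation count $2^n$. The index bookkeeping (starting the induction at $m=1$ so only $k\geq 1$ instances of the hypothesis are used) is handled correctly, so no gaps remain.
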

\begin{proof}
Follow the usual argument using $(n,\epsilon)$-separated subsets of $\mathfrak{C}$ (see \cite{Walt}).
\end{proof}
\noindent
The rest of the paper is devoted to showing how to obtain the entropy values listed in Table 1.

\subsection*{Identity map} Entropy $0$.

\subsection*{Shift endomorphism}
Arises from $\sigma_{23}$. The entropy is equal to $\log 2$ (see \cite{Boca}). The shift $\theta$
leaves the canonical masa invariant and $\hte(\theta) = \hte(\theta|_{\twoUHF}) =
\hte(\theta|_{\twomasa})$.

\subsection*{Flip transformation}
Arises from $\sigma_{(13)(24)}$, acts as $\edom(s_1) = s_2$, $\edom(s_2) = s_1$. The entropy is
$0$; this follows immediately from general results of \cite{ds}, but can be also easily deduced
directly.

\subsection*{The transformation induced by $\sigma_{12}$}
Denote it simply by $\psi$. It is defined by
\[ \psi(s_1) = s_1 s_2 s_1^* + s_1 s_1 s_2^*, \;\;\; \psi(s_2) = s_2.\]
We have ($n,m \in \bn$)
\[ \psi(s_1^n) = s_1 s_2^n s_1^* + s_1 s_2^{n-1}s_1 s_2^*,\;\; \psi(s_2^m) = s_2^m\]
(the first formula can be easily shown inductively, and the second  is obvious). This leads to
\[ \psi(s_1^{i_1} s_2^{j_1} \cdots s_2^{j_{k-1}} s_1^{i_k}) = s_1 s_2^{i_1-1} s_1 s_2^{j_1-1}
\cdots s_1 s_2^{j_{k-1}-1} s_1 (s_2^{i_k-1} s_1 s_2^* + s_2^{i_k} s_1^*),\]
\[ \psi(s_1^{i_1} s_2^{j_1} \cdots s_2^{j_{k-1}} s_1^{i_k}s_2^{j_k} ) = s_1 s_2^{i_1-1} s_1 s_2^{j_1-1}
\cdots s_1 s_2^{j_{k-1}-1} s_1 s_2^{i_k-1} s_1 s_2^{j_k -1}\] ($k \in \bn$, $i_1,\ldots,i_k,j_1,\ldots,j_k \in \bn$).
Further if $s_{\nu} =s_1^{i_1} s_2^{j_1} \cdots s_2^{j_{k-1}} s_1^{i_k}$ then
\[ \psi(s_{\nu} s_{\nu}^*) = s_{\nu_1}s_{\nu_1}^* + s_{\nu_2}s_{\nu_2}^*,\]
where \[s_{\nu_1} = s_1 s_2^{i_1-1} s_1 s_2^{j_1-1} \cdots s_1 s_2^{j_{k-1}-1} s_1 s_2^{i_k-1} s_1,\]
\[s_{\nu_2} = s_1 s_2^{i_1-1} s_1
s_2^{j_1-1} \cdots s_1 s_2^{j_{k-1}-1} s_1 s_2^{i_k}.\] This shows immediately that
\[ \psi(s_{\nu} s_{\nu}^*) = s_{\wt{\nu}} s_{\wt{\nu}} ^*, \]
where
\[ s_{\wt{\nu}} =  s_1 s_2^{i_1-1} s_1 s_2^{j_1-1}
\cdots s_1 s_2^{j_{k-1}-1} s_1 s_2^{i_k-1}. \] If we have an index ending with $2$, so that $s_\mu
=s_1^{i_1} s_2^{j_1} \cdots s_2^{j_{k-1}} s_1^{i_k}s_2^{j_k}$, then
\[ \psi(s_{\mu} s_{\mu}^*) = s_{\wt{\mu}} s_{\wt{\mu}} ^*, \]
where
\[ s_{\wt{\mu}} = s_1 s_2^{i_1-1}
s_1 s_2^{j_1-1} \cdots s_1 s_2^{j_{k-1}-1} s_1 s_2^{i_k-1} s_1 s_2^{j_k -1}.\]
Note that each occurrence of $s_1$ in $\psi(s_{\mu}s_{\mu}^*)$ is caused by a `change' in the
sequence represented by $\mu$ (if we assume that all sequences $\mu$ have $s_2$ as the $0$-th
element). This observation implies that any sequence $\mu$ ending in $2$ gives an output sequence
with an even number of $1$'s (even number of changes), so that $\psi|_{\twomasa}$ is induced by the
transformation of $\mathfrak{C}$ given by
\[ (T_{\psi}(w))_k = \begin{cases} 1 & \textrm{if }  \sharp \{j \leq k: w_j=1\} \textrm{ is odd,} \\
                           2 & \textrm{if }  \sharp\{j \leq k: w_j=1\} \textrm{ is even.} \end{cases}\]
It is easy to see that $h_{\textrm{top}}(T_{\psi}) = 0$, so also $\hte(\psi|_{\twomasa})=0$.

We will see that there is another masa in $\Cun$ which is left invariant by $\psi$ and such that the corresponding restriction
has entropy $\log 2$. Let $X= s_1 s_2^* + s_2 s_1^*$. Then
\[ \psi(X) = s_1 s_2 s_1^* s_2^* + s_1 s_1 s_2^* s_2^* + s_2 s_1 s_2^* s_1^* + s_2 s_2 s_1^* s_1^* = s_1 X s_2 ^* + s_2 Xs_1^*.\]
Moreover if $\theta:\twoCun \to \twoCun$ denotes the canonical shift endomorphism then
\[ \psi (\theta (X)) = \psi(s_1 X s_1^* + s_2 X s_2^*) = \psi(s_1) (s_1 X s_2^* + s_2 X s_1^*) \psi(s_1^*) + s_2 \psi(X) s_2^*
= \theta (\psi (X)).\] We will now show that for each $k \in \bn$ \begin{equation} \label{commuteX} \theta^k (\psi (X)) =
\psi(\theta^k(X))\end{equation}
 Suppose we have shown for some $n \in \bn$
that $\theta^n (\psi (X)) = \psi(\theta^n(X))$. Then
\begin{align*}\psi(\theta^{n+1} (X)) &= \sum_{\mu \in \Ind_{n+1}} \psi (s_{\mu} X s_{\mu}^*) \\& = \sum_{\nu \in \Ind_{n}} \left(\psi (s_1)
\psi(s_{\nu} X s_{\nu}^*) \psi(s_1^*) + s_2 \psi(s_{\nu} X s_{\nu}^*) s_2^*  \right) \\&= \psi (s_1) \sum_{\nu \in \Ind_n}
s_{\nu} \psi (X) s_{\nu}^* \psi(s_1)^* + s_2 \sum_{\nu \in \Ind_n} s_{\nu} \psi (X) s_{\nu}^* s_2^*  \\&= \sum_{\mu \in
\Ind_{n+1}} s_{\mu} \psi(X) s_{\mu}^* = \theta^{n+1}(\psi(X)).
\end{align*}
The second last equality follows if we notice that $\psi(s_1) s_{\nu} = s_1 s_{\wt{\nu}}$, where $\wt{\nu}$ equals $\nu$ but
with the first letter `switched'.

The formula \eqref{commuteX} will become useful when we view the UHF algebra $\twoUHF$ as the
tensor product $\bigotimes_{i=1}^{\infty} M_2^{(i)}$. Define first
\[ E= \frac{1}{2} (I + X), \;\;\; F = \frac{1}{2} (I-X).\]
It is clear that $E$ and $F$ are minimal projections in the first matrix factor of the UHF algebra. Thus the algebra generated by
$\{\theta^n(E), \theta^n(F): n \in \bn_0\}$ is a masa, further denoted by $\mathcal{C}_{E,F}$.  Because of
\eqref{commuteX} we immediately see that also
\[\theta^k (\psi (E)) = \psi(\theta^k(E)), \;\;\; \theta^k (\psi (F)) =
\psi(\theta^k(F))\] for all $k \in \bn$. As in the tensor picture $\psi(X) = X \ot X$, it is easy to see that
\[ \psi(E) = E \ot E + F \ot F, \;\;\; \psi (F) = F \ot F + E \ot E.\]
In conjunction with the previous statement this implies that $\psi$ leaves $\mathcal{C}_{E,F}$
invariant. The algebra $\mathcal{C}_{E,F}$ is isomorphic to $C(\mathfrak{C})$. The isomorphism may
be given for example by identifying $E$ with $\chi_{Z_1}$ and $F$ with $\chi_{Z_2}$ so that for
example $E \ot F \ot E \ot E$ is mapped to $ \chi_{Z_{1211}}$. It is easy to show that $T_{E,F}$,
the induced continuous map on $\mathfrak{C}$, is given by the formula:
\[(T_{E,F}(w))_k = \begin{cases} 1 & \textrm{if }  w_k=w_{k+1}, \\
                           2 & \textrm{if }  w_k \neq w_{k+1}. \end{cases}\]
By lemma \ref{ent} $\hte
(\psi_{\mathcal{C}_{E,F}}) = h_{\textrm{top}}(T_{E,F}) = \log 2$. Together with Theorem \ref{upest}
this implies that
\[ \hte(\psi) = \hte(\psi|_{\twoUHF}) = \log 2.\]


\subsection*{The transformation induced by $\sigma_{1324}$}
Let $\psi$ denote again the endomorphism induced by $\sigma_{12}$ and let $\psi'$ denote the one induced by $\sigma_{1324}$. Then
\[\psi' (s_1) = \psi(s_2), \;\; \psi'(s_2) = \psi'(s_1).\]
Note that this implies in particular that on the masa $\mathcal{C}_{E,F}$ introduced earlier the endomorphisms $\psi'$ and $\psi$
coincide. Indeed, $\psi(E) = \psi'(E)$ and also
\begin{align*} \psi' (\theta^n(E)) &= \psi' (\sum_{\mu \in \Ind_n} s_{\mu} E
s_{\mu}^*) = \sum_{\mu \in \Ind_n} \psi'(s_{\mu}) \psi'(E) \psi'(s_{\mu}^*) \\&= \sum_{\mu \in \Ind_n} \psi(s_{\mu}) \psi(E)
\psi(s_{\mu}^*) = \psi(\theta^n(E)).\end{align*}
 Thus $\hte(\psi') \geq \hte(\psi'|_{\mathcal{C}_{E,F}}) = \hte(\psi|_{\mathcal{C}_{E,F}})= \log 2$ and we
obtain
\[ \hte(\psi) = \hte(\psi|_{\twoUHF}) = \log 2.\]
Note that as $\psi'(\sum_{J \in \Ind_k, j_k = 1} s_J s_J^*)= \psi(\sum_{J \in \Ind_k, j_k = 2} s_J
s_J^*)$, the restriction of $\psi'$ to $\twomasa$ is isomorphic to the map given by
\[ (T_{\psi'}(w))_k = \begin{cases} 2 & \textrm{if }  \sharp \{j \leq k: w_j=1\} \textrm{ is odd} \\
                           1 & \textrm{if }  \sharp\{j \leq k: w_j=1\} \textrm{ is even} \end{cases}\]
and thus $\hte(\psi'|_{\twomasa})=0$.


\subsection*{The transformation induced by $\sigma_{(14)(23)}$}
It is shown in \cite{Kawa} that this endomorphism is given by the conjugation with the unitary $s_1
s_2^* + s_2 s_1^*$. Thus it leaves each of the subspaces $F_{p,l}$ ($p,l \in \bn$) invariant and
one can easily deduce using the Kolmogorov-Sinai property that its entropy is $0$.

\subsection*{The transformation induced by $\sigma_{(12)(34)}$}
This one is the composition of the flip automorphism and $\rho_{(14)(23)}$. As
they both leave $F_{p,l}$ invariant, the entropy is $0$.

\subsection*{The transformations induced by $\sigma_{14}, \sigma_{132},
\sigma_{124}, \sigma_{143}, \sigma_{234},$ $ \sigma_{1243}, \sigma_{1342}$}

Let $\sigma$ be one of the permutations from the above list. It is easy to show inductively that for any $k \in \bn$ and $J \in
\Ind_k$ we have
\begin{equation} \edom(s_{J}) = s_{J_1} s_1^* + s_{J_2} s_2^*,\label{genform}\end{equation}
where $J_1,J_2$ are certain multi-indices  in $\Ind_k$. This implies, as we will show below, that in some special cases the
formula for the map $T_{\edom}:\mathfrak{C} \to \mathfrak{C}$ induced by the restriction of $\psi$ to $\twomasa$ is determined already by the value of
$\psi(s_1 s_1^*)$. Let us formulate it as a lemma:

\begin{lem} \label{2cases}
Suppose that the endomorphism $\edom: \twoCun \to \twoCun$ satisfies the condition \eqref{genform}. If
$\edom(s_1 s_1^*) = s_1 s_2 s_2^* s_1^* + s_2 s_2 s_2^* s_2^*$ then
\[(T_{\edom}(w))_k = \begin{cases} 1 & \textrm{if }  w_{k+1}=2, \\
                           2 & \textrm{if }  w_{k+1}=1. \end{cases}\]
If $\edom(s_1 s_1^*) = s_1 s_1 s_1^* s_1^* + s_2 s_1 s_1^* s_2^*$ then
\[(T_{\edom}(w))_k = \begin{cases} 1 & \textrm{if }  w_{k+1}=1, \\
                           2 & \textrm{if }   w_{k+1}=2. \end{cases}\]
\end{lem}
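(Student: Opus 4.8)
The plan is to combine the structural identity \eqref{genform} with the defining relation between $\edom|_{\twomasa}$ and $T_{\edom}$ coming from Lemma \ref{observe}(ii). First I would unwind what \eqref{genform} says on the masa: since $\edom(s_J) = s_{J_1}s_1^* + s_{J_2}s_2^*$ for $J \in \Ind_k$, applying this to $s_J s_J^*$ gives $\edom(s_J s_J^*) = s_{J_1}s_1^*s_1 s_{J_1}^* + s_{J_1}s_1^*s_2 s_{J_2}^* + \cdots = s_{J_1}s_{J_1}^* + s_{J_2}s_{J_2}^*$, using $s_1^*s_2 = 0$. Summing over all $J \in \Ind_k$ with $j_k = i$ fixed, I get $\edom(\sum_{J \in \Ind_k, j_k = i} s_J s_J^*) = \sum_{J}(s_{J_1}s_{J_1}^* + s_{J_2}s_{J_2}^*)$, a sum of rank-one projections in $F_{k,k}$; by Lemma \ref{observe}(ii) the condition $(T_{\edom}(w))_k = i$ is equivalent to $w$ lying in the corresponding union of cylinder sets.

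The second step is to identify those cylinder sets when $k=1$, which is exactly what the hypothesis on $\edom(s_1 s_1^*)$ pins down. In the first case, $\edom(s_1 s_1^*) = s_1 s_2 s_2^* s_1^* + s_2 s_2 s_2^* s_2^* = s_{12}s_{12}^* + s_{22}s_{22}^*$, which corresponds to $Z_{12} \cup Z_{22} = \{w : w_2 = 2\}$. Hence $(T_{\edom}(w))_1 = 1 \iff w_2 = 2$. Since $\{s_J s_J^* : j_k = 1\}$ and $\{s_J s_J^* : j_k = 2\}$ together sum to $1$ in $F_{k,k}$ and $\edom$ is a unital $*$-homomorphism, the image of the complementary sum is the complementary projection, so $(T_{\edom}(w))_1 = 2 \iff w_2 = 1$; this settles the $k=1$ component. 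In the second case, $\edom(s_1 s_1^*) = s_{11}s_{11}^* + s_{21}s_{21}^*$ corresponds to $Z_{11}\cup Z_{21} = \{w : w_2 = 1\}$, giving the other formula.

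The third step is to propagate this from the first coordinate to all coordinates using the covariance of $\edom$ with the shift. The key point is that \eqref{genform} forces $\edom$ to commute suitably with $\theta$ on the masa: writing $\sum_{J \in \Ind_{k}, j_k = i} s_J s_J^* = \theta^{k-1}(s_i s_i^*)$ — wait, more precisely $\sum_{J\in\Ind_k,\, j_k=i} s_J s_J^* = \sum_{L\in\Ind_{k-1}} s_L s_i s_i^* s_L^* = \theta^{k-1}(s_i s_i^*)$ — I would show $\edom(\theta^{k-1}(s_is_i^*)) = \theta^{k-1}(\edom(s_is_i^*))$. This follows from \eqref{genform}: for $L \in \Ind_{k-1}$, $\edom(s_L s_i s_i^* s_L^*) = \edom(s_L)\edom(s_is_i^*)\edom(s_L)^* = (s_{L_1}s_1^* + s_{L_2}s_2^*)\,\edom(s_is_i^*)\,(s_1 s_{L_1}^* + s_2 s_{L_2}^*)$; since $\edom(s_is_i^*)\in F_{2,2}$ is a diagonal-type sum of the form $\sum_a s_{ia'}s_{ia'}^*$ and $s_j^* s_{ia'} = 0$ unless $j = i$, actually one must be a touch careful — the cleanest route is to note $\edom$ restricted to $\twomasa$ is the map $x \mapsto x\circ T_{\edom}$, and \eqref{genform} together with the $k=1$ computation shows $T_{\edom}$ has the property that $(T_{\edom}(w))_k$ depends only on $w_{k+1}$ in the stated way; concretely, by Lemma \ref{observe}(ii) applied for general $k$ one reduces the $k$-th coordinate question to the shift $\theta^{k-1}$ of the $k=1$ situation, and $\edom(\theta^{k-1}(s_is_i^*))$ is computed by expanding via \eqref{genform} to be exactly $\theta^{k-1}(\edom(s_is_i^*))$, i.e. $\sum_{L\in\Ind_{k-1}} s_L (s_{i\,2}s_{i\,2}^* + s_{\bar i\,2}s_{\bar i\,2}^*) s_L^*$ in the first case. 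This union of cylinders is $\{w : w_{k+1} = 2\}$, giving $(T_{\edom}(w))_k = 1 \iff w_{k+1}=2$, and complementation handles the value $2$; the second case is identical with $Z_1$-cylinders.

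\textbf{Main obstacle.} The routine-but-delicate part is the induction establishing $\edom(\theta^{k-1}(s_i s_i^*)) = \theta^{k-1}(\edom(s_is_i^*))$ cleanly from \eqref{genform}: one has to check that when $\edom(s_L)$ is pushed through $\edom(s_is_i^*)$, the "wrong" cross terms $s_{L_1}s_1^* \,(\cdots)\, s_2 s_{L_2}^*$ vanish because $\edom(s_is_i^*)$ — being of the form $\sum_{a\in\{1,2\}} s_{ca}s_{ca}^*$ for a single $c$ — kills $s_j$ on one side unless the indices match, so that the sum collapses to $\sum_L s_L \,\edom(s_is_i^*)\, s_L^*$ exactly. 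Equivalently, and perhaps more transparently, one verifies directly that $\edom(s_i s_i^*)$ lies in $F_{2,2}$ with a specific support and that \eqref{genform} implies $\edom(s_L x s_L^*) = s_L \edom(x) s_L^*$ for $x$ in the range of $\edom$ on $F_{1,1}$ and $L\in\Ind_{k-1}$; this is a short computation but is where all the work sits, everything else being bookkeeping with cylinder sets and Lemma \ref{observe}.
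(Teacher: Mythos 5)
Your overall route (identify $\edom\bigl(\sum_{J\in\Ind_k,\,j_k=1}s_Js_J^*\bigr)$ as a union of cylinder projections, read off $T_{\edom}$ via Lemma \ref{observe}(ii), get the value $2$ by complementation) is the paper's route, and your $k=1$ step is correct. The genuine gap is in the propagation step, precisely at the point you yourself flag as ``where all the work sits'': the term-by-term identity $\edom(s_L\,s_is_i^*\,s_L^*)=s_L\,\edom(s_is_i^*)\,s_L^*$, equivalently the termwise version of $\edom(\theta^{k-1}(s_is_i^*))=\theta^{k-1}(\edom(s_is_i^*))$, is false. The reason is that in case 1 the hypothesis gives $\edom(s_1s_1^*)=s_1s_2s_2^*s_1^*+s_2s_2s_2^*s_2^*$, which is \emph{not} of the form $\sum_a s_{ca}s_{ca}^*$ for a single first letter $c$ (it fixes the \emph{second} letter, not the first). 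So when you conjugate by $\edom(s_L)=s_{L_1}s_1^*+s_{L_2}s_2^*$, the cross terms do vanish, but what survives is $s_{L_1}s_2s_2^*s_{L_1}^*+s_{L_2}s_2s_2^*s_{L_2}^*$, whose initial segments are $L_1,L_2$ (determined by $\edom$), not $L$. Concretely, for $\sigma_{14}$ (which satisfies case 1) and $L=(1)$ one computes $\edom\bigl(s_1s_1(s_1s_1)^*\bigr)=s_1s_2s_2(s_1s_2s_2)^*+s_2s_2s_2(s_2s_2s_2)^*$, whereas $s_1\edom(s_1s_1^*)s_1^*=s_1s_1s_2(s_1s_1s_2)^*+s_1s_2s_2(s_1s_2s_2)^*$; these are different projections.

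What is true, and what the paper's proof establishes, is the \emph{summed} statement: expanding as above, $\edom\bigl(\sum_{J\in\Ind_k,\,j_k=1}s_Js_J^*\bigr)$ is a sum of $2^k$ cylinder projections of length $k+1$, all with last letter $2$. To conclude that this sum is $\sum_{I\in\Ind_{k+1},\,i_{k+1}=2}s_Is_I^*$ (equivalently that the union of cylinders is $\{w:\,w_{k+1}=2\}$, equivalently that the commutation with $\theta^{k-1}$ holds after summation) you need one more observation, absent from your argument: since $\edom$ maps this projection to a projection (or since $\edom$ is injective, $\twoCun$ being simple), the cylinder projections occurring are pairwise orthogonal, hence pairwise distinct, and being $2^k$ in number they exhaust all multi-indices of length $k+1$ ending in $2$. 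With that exhaustion argument in place, Lemma \ref{observe}(ii) gives $(T_{\edom}(w))_k=1\iff w_{k+1}=2$ and complementation gives the value $2$; the second case is analogous. So your proposal is repairable, but the mechanism you identify as the crux must be replaced by this counting/orthogonality step (which is in effect what the paper does by computing $\edom(s_Js_1s_1^*s_J^*)$ directly for every $J$ and then summing).
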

\begin{proof}
It is enough to consider the first case, the second follows in an analogous way. Suppose that $J \in \Ind$ and $J_1, J_2$ are as
in the formula \eqref{genform}.  Then
\begin{align*}\edom(s_J s_1 s_1^*s_J^*) &= (s_{J_1} s_1^* + s_{J_2} s_2^*)(s_1 s_2 s_2^*
s_1^* + s_2 s_2 s_2^* s_2^*) (s_{1} s_{J_1}^* + s_2 s_{J_2}^*) \\&= s_{J_1} s_2 s_2^* s_{J_1}^* + s_{J_2} s_2 s_2^*
s_{J_2}^*.\end{align*} This implies that
\[\edom(\sum_{J \in \Ind_k, j_k = 1} s_J s_J^*) = \sum_{I \in \Ind_{k+1},i_{k+1}= 2} s_{I} s_{I}^*\]
and we can finish the proof using \ref{observe}.(ii).
\end{proof}

The analysis of the values at $s_1 s_1^*$ together with Theorem \ref{upest}, Lemma \ref{2cases} and
\ref{ent} show that for any $\sigma$ from the list $
\sigma_{14}, \sigma_{132}, \sigma_{124}, \sigma_{143}, \sigma_{234},$ $ \sigma_{1243},
\sigma_{1342}$ we have
\[ \hte(\edom)= \hte(\edom|_{\twomasa}) = \log 2.\]

\subsection*{The transformation induced by $\sigma_{13}$}

Let $\sigma= \sigma_{13}$. It is easy to see that actually in this case the formula \eqref{genform}
can be made more precise so that we obtain for any $k \in \bn$ and $J \in \Ind_k$
\[ \edom(s_{J}) = s_{J_1} s_{1} s_1^* + s_{J_2}s_2 s_2^*,\] where now $J_1,J_2 \in
\Ind_{k-1}$. Moreover we have
\[ \edom(s_1 s_1^*) = s_1 s_2 s_2^* s_1^* + s_2 s_1 s_1^* s_2^*,\]
so that
\begin{align*}  \edom(s_{J} s_{1} s_1^* s_J^*) &= (s_{J_1} s_{1} s_1^* + s_{J_2}s_2 s_2^*) (s_1 s_2 s_2^* s_1^* + s_2 s_1
s_1^* s_2^*) (s_{J_1} s_{1} s_1^* + s_{J_2}s_2 s_2^*)^* \\&= s_{J_1} s_1 s_2 s_2^* s_1^* s_{J_1}^* + s_{J_2} s_2 s_1 s_1^* s_2^*
s_{J_2}^*.\end{align*}
 This implies easily that $\edom$ restricted to $\twomasa$ is induced by the map
\[(T_{\edom}(w))_k = \begin{cases} 1 & \textrm{if }  w_k\neq w_{k+1}, \\
                           2 & \textrm{if }  w_k = w_{k+1}. \end{cases}\]
As in the last subsection we obtain
\[ \hte(\edom)= \hte(\edom|_{\twomasa}) = \log 2.\]


\subsection*{The transformation induced by $\sigma_{1432}$}
The endomorphism is the composition of the inner automorphism $\rho_{(1243)}$ with $\rho_{(13)}$. This implies that
$\edom$ restricted to $\twomasa$ is induced by the map
\[(T_{\edom}(w))_k = \begin{cases} 1 & \textrm{if } k \geq 2 \textrm{ and } w_k\neq w_{k+1} \textrm{ or } k=1 \textrm{ and } w_1=w_2 , \\
                           2 & \textrm{if } k \geq 2 \textrm{ and } w_k= w_{k+1} \textrm{ or } k=1 \textrm{ and } w_1\neq w_2, \end{cases}\]
and we obtain
\[ \hte(\edom)= \hte(\edom|_{\twomasa}) = \log 2.\]


\subsection*{The transformation induced by $\sigma_{123}$}
The endomorphism is given by the formulas
\[ \edom(s_1) = s_1 s_2 s_1^* + s_2 s_1 s_2^*, \;\; \edom(s_2) = s_1 s_1 s_1^* + s_2 s_2 s_2^*,\]
so that
\[ \edom(s_1 s_1^*) = s_1 s_2 s_2^* s_1^* + s_2 s_1 s_1^*s_2^*,\]
\[ \edom(s_2 s_2^*) = s_1 s_1 s_1^* s_1^* + s_2 s_2 s_2^* s_2^*.\]
It is also easy to check that it has the property described in \eqref{genform}. Moreover
the following lemma  holds true.

\begin{lem} \label{st1}
Let $k \in \bn_0$, $J \in \Ind_k$. Then
\[ \edom(s_{J}s_1 s_1^* s_J^*) = s_{J_1} s_{J_1}^* + s_{J_2} s_{J_2}^*,\]
where $J_1, J_2$ are certain indices in $\Ind_{k+1}$ such that the number of constant segments of 1's and 2's in  $J_{1}$ and in  $J_{2}$ is  even.
\end{lem}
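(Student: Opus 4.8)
The plan is to trace the projection $s_J s_1 s_1^* s_J^* = s_{J1}\,s_{J1}^*$, where $J1 := (j_1,\ldots,j_k,1) \in \Ind_{k+1}$, through $\edom$ by first pinning down the action of $\edom$ on an arbitrary monomial $s_I$. Once a closed formula for $\edom(s_I)$ is available, the lemma drops out of a short combinatorial remark about words over $\{1,2\}$.

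\emph{Step 1 (closed formula for $\edom$ on monomials).} First I would prove by induction on $m=|I|\ge 1$ that, for $I=(i_1,\ldots,i_m)$,
\[ \edom(s_I) = s_{P(I)}\,s_1^* + s_{Q(I)}\,s_2^*, \qquad P(I) = (1,\,3-i_1,\,\ldots,\,3-i_m),\quad Q(I) = (2,\,i_1,\,\ldots,\,i_m), \]
with $P(I), Q(I) \in \Ind_{m+1}$. The case $m=1$ is exactly the two displayed identities for $\edom(s_1)$ and $\edom(s_2)$ recorded above. For the inductive step one writes $s_{Ii}=s_Is_i$, uses $\edom(s_{Ii})=\edom(s_I)\edom(s_i)$, substitutes the inductive expression for $\edom(s_I)$ together with the base expression for $\edom(s_i)$, and multiplies out; the Cuntz relations $s_1^*s_1=s_2^*s_2=1$ and $s_1^*s_2=s_2^*s_1=0$ annihilate the cross terms and reassemble the rest into $s_{P(Ii)}s_1^*+s_{Q(Ii)}s_2^*$. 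In particular $P(I)$ always begins with the letter $1$ and ends with $3-i_m$, while $Q(I)$ always begins with $2$ and ends with $i_m$.

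\emph{Step 2 (applying $\edom$ to the cylinder).} Applying Step 1 with $I=J1$ (its last letter being $1$, so $3-i_m=2$) gives $\edom(s_{J1})=s_As_1^*+s_Bs_2^*$ where
\[ A := (1,\,3-j_1,\,\ldots,\,3-j_k,\,2), \qquad B := (2,\,j_1,\,\ldots,\,j_k,\,1), \]
both of length $k+2$. Since $\edom$ is a $*$-homomorphism,
\[ \edom(s_Js_1s_1^*s_J^*) = \edom(s_{J1})\,\edom(s_{J1})^* = (s_As_1^*+s_Bs_2^*)(s_1s_A^*+s_2s_B^*) = s_As_A^*+s_Bs_B^*, \]
the cross terms vanishing once more by $s_1^*s_2=s_2^*s_1=0$, and $s_As_A^*\perp s_Bs_B^*$ because $A$ and $B$ start with distinct letters. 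Thus $J_1=A$, $J_2=B$ (so in fact $J_1,J_2\in\Ind_{k+2}$). The boundary case $k=0$ is automatic: then $J1=(1)$ and one invokes the base case of Step 1 directly.

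\emph{Step 3 (parity of the segment count).} Finally I would observe that for any word $w$ over $\{1,2\}$ the number of its maximal constant segments equals $1$ plus the number of positions $i$ with $w_i\ne w_{i+1}$; since crossing each such position toggles the symbol, this count is even precisely when the first and last letters of $w$ differ. By construction $A$ starts with $1$ and ends with $2$, while $B$ starts with $2$ and ends with $1$, so both $J_1$ and $J_2$ have an even number of constant segments, as required. The only place demanding real care is Step 1 --- getting the length shift $m\mapsto m+1$ and the complementation $i\mapsto 3-i$ exactly right; Steps 2 and 3 are then a two-line computation and a counting remark.
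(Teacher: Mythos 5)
Your proof is correct, but it runs along a genuinely different line from the paper's. The paper argues by induction on $k$ directly on the cylinder projections, prepending a letter ($s_1\cdot$ or $s_2\cdot$) on the left and tracking only the \emph{parity} of the number of constant segments of the resulting indices at each stage; it never identifies $J_1,J_2$ explicitly. You instead prove a closed formula for $\edom$ on all monomials, $\edom(s_I)=s_{P(I)}s_1^*+s_{Q(I)}s_2^*$ with $P(I)=(1,3-i_1,\ldots,3-i_m)$ and $Q(I)=(2,i_1,\ldots,i_m)$ (a right-appending induction using the Cuntz relations), and then read off $J_1,J_2$ exactly; the parity statement becomes the one-line observation that a word over $\{1,2\}$ has an even number of constant segments precisely when its first and last letters differ, which holds for both $P(J1)$ and $Q(J1)$ by construction. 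Your route is stronger and more informative: it pins down the induced map $T_\sigma$ on $\twomasa$ concretely rather than only its parity behaviour, at the price of the bookkeeping in your Step 1, whereas the paper's induction is shorter per step but yields only what is strictly needed. One further point in your favour: your computation shows $J_1,J_2\in\Ind_{k+2}$ (consistent with Lemma \ref{inv} and with the paper's own base case $\edom(s_1s_1^*)$, whose indices have length $2$), so the ``$\Ind_{k+1}$'' in the statement is an off-by-one slip which your explicit formula makes visible; this does not affect the parity conclusion, which is the content actually used afterwards.
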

\begin{proof}
The statement will be proved by induction on $k$. The case $k=0$ follows from the explicit formulae before the lemma. Let
now $J \in \Ind_{k}$ and compute
\[\edom(s_1 s_J s_J^* s_1^*) = (s_1 s_2 s_1^* + s_2 s_1 s_2^*) (s_{J_1} s_{J_1}^* + s_{J_2} s_{J_2}^*) (s_1 s_2 s_1^* + s_2 s_1
s_2^*)^*.\] Suppose first that  $J_1 = 1 K$ for some $K \in \Ind_{k}$. Then
\[ (s_1 s_2 s_1^* + s_2 s_1 s_2^*) (s_{J_1} s_{J_1}^*) (s_1 s_2 s_1^* + s_2 s_1 s_2^*)^* = s_1 s_2 s_K s_K^* s_2^* s_1^*.\]
We want to count the constant segments in the multi-index $12K$. A moment of thought shows that it has either equally many
constant segments as $1K$ (if $K$ began with $2$) or two more (if $K$ began with $1$).  Similarly if $J_1 = 2K$ for some $K
\in \Ind_{k-1}$ we have
\[ (s_1 s_2 s_1^* + s_2 s_1 s_2^*) (s_{J_1} s_{J_1}^*) (s_1 s_2 s_1^* + s_2 s_1 s_2^*)^* = s_2 s_1 s_K s_K^* s_1^* s_2^*,\]
and again the multi-index $21K$ has either equally many constant segments as $2K$ (if $K$ originally began with $1$) or two more
(if $K$ originally began with $2$).

It remains to consider what happens when on the left $J$ is extended by $2$ instead of $1$:
\[ \edom(s_2 s_J s_J^* s_2^*) =(s_1 s_1 s_1^* + s_2 s_2 s_2^*) (s_{J_1} s_{J_1}^* + s_{J_2} s_{J_2}^*) (s_1 s_1 s_1^* + s_1 s_1
s_1^*)^*.\] Suppose first that $J_1 = 1 K$ for some $K \in \Ind_{k}$. Then
\[ (s_1 s_1 s_1^* + s_2 s_2 s_2^*) (s_{J_1} s_{J_1}^*) (s_1 s_1 s_1^* + s_2 s_2 s_2^*)^* = s_1 s_1 s_K s_K^* s_1^* s_1^*.\]
The multindex  $11K$ has obviously equally many constant segments as $1K$. An analogous argument suffices if $J_1 = 2 K$ for
some $K \in \Ind_{k-1}$ and the inductive proof is finished - the parity of the number of constant segments in the multi-indices
appearing in the $(k+1)$-th stage is the same as in those which appeared in the $k$-th stage.
\end{proof}

The lemma above implies that the map on $\mathfrak{C}$ induced by $\edom|_{\twomasa}$ is given by
the formula:
\[(T_{\edom}(w))_k = \begin{cases} 1 & \textrm{if the number of constant segments in }  w|_{k+1]} \textrm{ is  even,} \\
                           2 & \textrm{if the number of constant segments in }  w|_{k+1]} \textrm{ is  odd}. \end{cases}\]
This implies again that
\[ \hte(\edom)= \hte(\edom|_{\twomasa}) = \log 2.\]


\subsection*{The transformation induced by $\sigma_{142}$}
We will apply a method analogous to that used for $\sigma_{123}$. Here the endomorphism is given by the formulas
\[ \edom(s_1) = s_2 s_2 s_1^* + s_{1} s_1 s_2^*, \;\; \edom(s_2) = s_2 s_1 s_1^*+ s_1 s_2 s_2^*,\]
so that
\[ \edom(s_1 s_1^*) = s_2 s_2 s_2^* s_2^* + s_1 s_1 s_1^* s_1^*,\]
\[ \edom(s_2 s_2^*) = s_2 s_1 s_1^* s_2^* + s_1 s_2 s_2^* s_1^*.\]
 It is also easy to check that it has the property described in \eqref{genform}.

The next lemma is analogous to Lemma \ref{st1} and its proof is omitted.
\begin{lem}
Let $k \in \bn_0$, $J \in \Ind_k$. Then
\[ \edom(s_{J}s_1 s_1^* s_J^*) = s_{J_1} s_{J_1}^* + s_{J_2} s_{J_2}^*,\]
where $J_1, J_2$ are certain indices in $\Ind_{k+1}$ such that $k +$ the number of constant segments in  $J_{1}$ is  even and $k
+$ the number of constant segments in  $J_{2}$ is  even.
\end{lem}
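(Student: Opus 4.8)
The plan is to mimic the inductive argument of Lemma~\ref{st1} verbatim, tracking a modified parity invariant. Specifically, I would perform induction on $k$. For the base case $k=0$ the two explicit formulae $\edom(s_1 s_1^*) = s_2 s_2 s_2^* s_2^* + s_1 s_1 s_1^* s_1^*$ display indices $J_1 = (2,2)$ and $J_2 = (1,1)$, each with exactly one constant segment, and $0 + 1$ is odd --- so in fact one must be slightly careful about the exact statement; reading Lemma~\ref{st1} again one sees the quantity controlled there is the number of constant segments itself, whereas here it is $k$ plus that number. Checking $k=0$: we need $0 + (\text{segments in } J_i)$ even, but $(2,2)$ has one segment. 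This suggests the intended invariant in the displayed lemma should be read as $k+1+(\text{number of constant segments})$ even, or equivalently that the parity of $k + |J_i|_{\mathrm{seg}}$ is \emph{constant} along the induction; I would state the proof so as to track exactly the parity that is actually preserved, namely that $k + (\text{number of constant segments in } J_i)$ has fixed parity (and pin down that parity from the $k=0$ case).

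For the inductive step, suppose $\edom(s_J s_1 s_1^* s_J^*) = s_{J_1}s_{J_1}^* + s_{J_2}s_{J_2}^*$ with the claimed parity property for some $J \in \Ind_k$. I would compute, exactly as in Lemma~\ref{st1}, both $\edom(s_1 s_J s_1 s_1^* s_J^* s_1^*)$ and $\edom(s_2 s_J s_1 s_1^* s_J^* s_1^*)$ by expanding $\edom(s_1)$ and $\edom(s_2)$ against $\edom(s_J s_1 s_1^* s_J^*)$ using the given formulae for $\edom(s_1), \edom(s_2)$. Each term $s_{J_i}s_{J_i}^*$ with $J_i$ starting in $1$ or in $2$ produces a single new rank-one projection $s_{I}s_{I}^*$ with $I \in \Ind_{k+2}$, and the number of constant segments in $I$ changes by a controlled amount ($0$ or $\pm 1$ depending on whether certain adjacent letters coincide). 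Since $k$ increases by $1$ when passing from stage $k$ to stage $k+1$, the claim is that the change in (number of constant segments) has the opposite parity to $1$, i.e. is odd --- no, that is exactly what must be checked case by case from the four expansion patterns. This is the mechanical core: eight little sub-cases (two new left letters times whether $J_i$ starts with $1$ or $2$, crossed lightly with whether the next letter matches), in each of which one reads off how many new constant segments appear, entirely parallel to the calculation already done for $\sigma_{123}$.

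The main obstacle is purely bookkeeping: getting the four defining words $\edom(s_1) = s_2 s_2 s_1^* + s_1 s_1 s_2^*$ and $\edom(s_2) = s_2 s_1 s_1^* + s_1 s_2 s_2^*$ to interact correctly with a projection $s_{I}s_{I}^*$ and extracting the resulting index and its segment count; unlike the $\sigma_{123}$ case, here the leading pair inserted in front of the word can be $22$, $11$, $21$, or $12$, so the segment count sometimes jumps by $1$ rather than staying fixed or jumping by $2$, which is precisely why the $k$-shift enters the invariant. Once the eight sub-cases are tabulated the parity propagation is immediate and the induction closes; because the proof is a routine transcription of Lemma~\ref{st1}'s argument with $\edom(s_i)$ replaced by the new formulae, I would simply say ``The next lemma is analogous to Lemma~\ref{st1}; the proof is obtained by the same induction, now using the formulae $\edom(s_1) = s_2 s_2 s_1^* + s_1 s_1 s_2^*$, $\edom(s_2) = s_2 s_1 s_1^*+ s_1 s_2 s_2^*$, and tracking the parity of $k$ plus the number of constant segments, which is seen to be preserved at each stage,'' exactly as the excerpt already does.
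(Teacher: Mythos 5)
Your plan is the paper's own: the paper gives no argument here beyond declaring the lemma ``analogous to Lemma \ref{st1}'', and the induction you describe --- expand $\edom(s_1)$, $\edom(s_2)$ against the two projections from stage $k$ and track how the resulting replacement of the first letter of each old index by a two-letter block ($1K\mapsto 22K$ or $21K$, $2K\mapsto 11K$ or $12K$) alters the number of constant segments --- is exactly that analogy. Your base-case observation is also correct: from $\edom(s_1s_1^*)=s_1s_1s_1^*s_1^*+s_2s_2s_2^*s_2^*$ the quantity actually preserved is the parity of $k$ plus the segment count (which comes out odd as literally computed; the statement's ``even'' carries the same off-by-one as its ``$\Ind_{k+1}$'' for indices that in fact lie in $\Ind_{k+2}$), and pinning that parity down from $k=0$, as you propose, is the right way to state the proof. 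One caution: for the induction to close, the segment count must change by an \emph{odd} amount in every one of the four patterns, so your phrase ``$0$ or $\pm 1$'' would not suffice as written; the tabulation does give exactly $\pm 1$ in each case (the blocks $22$ and $11$ either merge with or split off from the following letter, while $21$ and $12$ always add one segment), so the parity propagates and the formula for $T_{\edom}$ and the entropy value $\log 2$ follow as in the $\sigma_{123}$ case.
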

%

It follows that
\[(T_{\edom}(w))_k = \begin{cases} 1 & \textrm{if } k + \textrm{the number of constant segments in }  w|_{k+1]} \textrm{ is  odd,} \\
                           2 & \textrm{if } k + \textrm{the number of constant segments in }  w|_{k+1]} \textrm{ is  even}. \end{cases}\]
Thus we once more obtain
\[ \hte(\edom)= \hte(\edom|_{\twomasa}) = \log 2.\]

\subsection*{The transformations induced by $\sigma_{34}, \sigma_{1423}, \sigma_{24}, \sigma_{1234}, \sigma_{243}$ and $\sigma_{134}$}
Arise from  $\sigma_{12}, \sigma_{1324}, \sigma_{13}, \sigma_{1432}, \sigma_{123}$ and $\sigma_{142}$ respectively, just by $1$ and
$2$ switching places.  The entropy values can  thus be read off from the earlier computations.

\begin{rem}
As in all the cases above the maximal value of the topological entropy is attained on a commutative subalgebra, the variational
principle has to hold for all $\edom$. Recall that this means that $\hte(\edom) = \sup_{\phi} h_{\phi} (\edom)$, where the
supremum is taken over all states on $\twoCun$ left invariant by $\edom$ and $h_{\phi}(\edom)$ denotes the dynamical state
entropy of Connes, Narnhofer and Thirring (see \cite{book}). It can be easily seen that in each case the supremum is realised by the state $\tau \circ \be$.
\end{rem}

\end{document}